\providecommand{\abs}[1]{\lvert#1\rvert}
\providecommand{\norm}[1]{\lVert#1\rVert}
\newcommand{\odd}{\in 2\mathbb{N}\hspace{-1pt}+\hspace{-1pt}1}
\newcommand{\even}{\in 2\mathbb{N}}
\newcommand{\ind}{1\hspace{-2.6mm}{1}} 
\newcommand\independent{\protect\mathpalette{\protect\independenT}{\perp}}
\def\independenT#1#2{\mathrel{\rlap{$#1#2$}\mkern2mu{#1#2}}}
\theoremstyle{plain}
\newtheorem{lemma}{Lemma}
\newtheorem{theorem}{Theorem}
\newtheorem{corollary}{Corollary}
\newtheorem{proposition}{Proposition}
\theoremstyle{definition}
\newtheorem{definition}{Definition}
\newtheorem{example}{Example}
\theoremstyle{remark}
\begin{document}
\title{Differential cumulants, hierarchical models and monomial ideals}
\author{
	Daniel Bruynooghe,\\
	Department of Statistics, London School of Economics,
	London, UK\\
	d.hawellek@lse.ac.uk\\
	Henry P. Wynn\\
	Department of Statistics, London School of Economics,
	London, UK\\
	h.wynn@lse.ac.uk
  }
\markboth{D. Bruynooghe and H.P.Wynn}{Differential cumulants, hierarchical models and monomial ideals}
\maketitle
\begin{abstract}
For a joint probability density function $f_X(x)$ of a random 
vector $X$ the mixed partial derivatives of $\log f_{X}(x)$
can be interpreted as limiting cumulants in an infinitesimally small
open neighborhood around $x$. Moreover, setting them 
to zero everywhere gives independence and conditional independence 
conditions. The latter conditions can be mapped, using an algebraic
differential duality, into monomial ideal conditions. 
This provides an isomorphism between hierarchical models and 
monomial ideals. It is thus shown that certain monomial 
ideals are associated with particular classes of hierarchical models.
\end{abstract}
Keywords: Differential cumulants, conditional independence, hierarchical models, monomial ideals.

\section{Introduction}
This paper draws together three areas: 
a new concept of differential 
cumulants, hierarchical models and the theory of monomial ideals in algebra.
The central idea is that for a strictly 
positive density $f_X(x)$ of a $p$-dimensional 
random vector $X$, the mixed partial derivative of 
the log density $g_X(x) = \log f_X(x)$ can be used to express 
independence and conditional independence statements. 
Thus, for random variables $X_1,X_2,X_3 \text{ in } \mathbb{R}$, the condition
\begin{equation}
   \frac{\partial^2}{\partial x_1 \partial x_2}g_{X_1,X_2,X_3}(x_1,x_2,x_3) 
	= 
	0 \text{ for all } (x_1,x_2,x_3) \text{ in } \mathbb{R}^3 \label{eqn: mixed partial derivatives} 
\end{equation}
is equivalent to the conditional independence statement
\begin{equation*}
   X_1 \independent X_2 | X_3.
\end{equation*}
In the next section we show how such mixed partial derivatives
can be interpreted as differential cumulants. Then, 
in section \ref{conditional independence statements}, 
we show how collections of differential 
equations like \eqref{eqn: mixed partial derivatives}
can be used to express independence 
and conditional independence models. 
Section \ref{hierarchical models} shows that, more generally, 
these collections can be used to define hierarchical statistical 
models of exponential form. 

Section \ref{monomial ideals} maps the hierarchical model 
conditions to monomial ideals, which are increasingly
being used within algebraic statistics. 
This isomorphism maps, for example, the mixed partial 
derivative condition \eqref{eqn: mixed partial derivatives} 
to the monomial ideal $<x_1x_2>$ within the polynomial 
ring $k[x_1,x_2,x_3]$. 
The equivalence allows ideal properties
to be interpreted as hierarchical 
model properties, opening up an algebraic-statistical
interface with some potential.

\section{Local and differential cumulants} \label{local and differential cumulants}
This section can be considered as a development
from a body of work on local correlation. 
Good examples are the papers of 
\citet{holland1987}, \citet{jones1996a} and \citet{bairamov2003}.
We draw particularly on \citet{mueller2001}.

Let $X \in \mathbb{R}^p$ be a random vector.
We assume $X$ has a $p+1$ times continuously differentiable density $f_X$.
Once we introduce the concept of differential cumulants, 
we further require $f_X$ be strictly positive.

For $x, k \text{ in } \mathbb{R}^p$ we set 
$
	x^k:=\prod_{i=1}^px_i^{k_i}, \; x!:=\prod_{i=1}^px_i!
$
and
$
	m_k 
	= 
	\mathbb{E}(X^k).
$
Let $M_X:\mathbb{R}^p\longrightarrow \mathbb{R}$ 
and $K_X:\mathbb{R}^p\longrightarrow \mathbb{R}$ 
denote the moment and cumulant generating functions of $X$ respectively.
For a vector $k \text{ in } \mathbb{N}^p$ we set  
\begin{equation*}
	D^k f(x) := \frac{\partial^{\,\norm{k}_1}}{\prod_{i=1}^p \partial x_i^{k_i}} f(x),
\end{equation*}
where $\norm{k}_1:=\sum_{i=1}^p \abs{k_i}$ is the Manhattan norm. 
By convention $D^0f(x):=f(x)$.

The cumulant $\kappa_k$ can be found by evaluating 
$D^k(\log(M_X(t))$ at zero. 
We use the multivariate chain rule 
\citep[given e.g. in][]{hardy2006} stated in Theorem \ref{theorem: Hardy's theorem}. 
At the heart of the chain rule is an identification 
of differential operators with multisets:
\begin{definition}[Multiset, multiplicity, size]
	A multiset $M$ is a set which may hold multiple copies of its elements.
	The number of occurrences of an element is its multiplicity. 
	The multiplicity of a multiset is the vector of multiplicities of its elements, 
	denoted by $\nu_M$. 
	The total number of elements $\abs{M}$\text{ in } $M$ is the size. 
	A multiset which is a set is called degenerate.
\end{definition}
\begin{example}[Partial derivative and multiset]
	The partial derivative
	$\frac{\partial^3}{\partial x_1 \partial x_3^3}f(x)$ 
	has associated multiset $\{1,3,3, 3\}$
	with multiplicity $(1,0,3)$ and size four. 
\end{example}
\begin{definition}[Partition of a multiset]
	Let $I$ be some index set and $(M_i)_{i \text{ in } I}$ be a family of multisets 
	with associated family of multiplicities
	$(\nu_{M_i})_{i \text{ in } I}$.
	A partition $\pi$ of a multiset $M$ is a multiset of multisets $\{(M_i)_{i \in I}\}$
	such that $\nu_M=\sum_{i \in I}\nu_{M_i}$.
	Being a multiset itself, a partition  
	can hold multiple copies of one or more multisets. 
\begin{example}[Partition of a multiset]
	The multiset $\{\{x_1,x_3\}, \{x_1,x_3\}, \{x_3\}\}$ is a partition 
	of $\{x_1,x_1,x_3,x_3,x_3\}$, since $(1,0,1)+(1,0,1)+(0,0,1) = (2,0,3)$.
	In the following, we will use the shorthand $\{x_1x_3|x_1x_3|x_3\}$.
\end{example}
\end{definition}
Associated with a partition $\pi$ of a multiset $M$ is a combinatorial quantity
to which we refer as the collapse number $c(\pi)$. It is defined as
\begin{equation*}
	c(\pi):=\frac{\nu_M!}{\prod_{i\in I}\nu_{M_i}! \nu_{\pi}!}.
\end{equation*}
See \cite{hardy2006} 
for a combinatorial interpretation of $c(\pi)$.
\begin{theorem}[Higher order derivative of chain functions] \label{theorem: Hardy's theorem}
\begin{equation*}
   D^kg(h(x))=\sum_{\pi \in \Pi(k)} c(\pi)
	D^{\abs{\pi}}g(h)
	\prod_{j=1}^{\abs{\pi}}D^{\nu_{M_j}}h(x),
\end{equation*}
where $\Pi(k)$ is the set of all partitions of a multiset with multiplicity $k$
and $M_j$ is the j-th multiset in the partition $\pi$.
\end{theorem}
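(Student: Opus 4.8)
The plan is to reduce the multivariate (multiset) formula to the classical set--partition version for \emph{distinct} variables, where all coefficients equal one, and then to recover the collapse numbers $c(\pi)$ by a single combinatorial counting argument. This cleanly separates the analytic content of the statement, which is an easy induction, from the combinatorial bookkeeping, which is where the coefficients actually come from.

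First I would introduce $n:=\norm{k}_1$ formal, pairwise distinct differentiation variables $t_1,\dots,t_n$, grouped so that the first $k_1$ of them play the role of $x_1$, the next $k_2$ the role of $x_2$, and so on. For distinct variables the formula reads
\begin{equation*}
\frac{\partial^{\,n}}{\partial t_1\cdots\partial t_n}\,g(h)=\sum_{\sigma}D^{\abs{\sigma}}g(h)\prod_{B\in\sigma}\frac{\partial^{\,\abs{B}}}{\prod_{j\in B}\partial t_j}\,h,
\end{equation*}
where the sum ranges over all \emph{set} partitions $\sigma$ of $\{1,\dots,n\}$ and $\abs{\sigma}$ is the number of blocks. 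I would prove this by induction on $n$: applying $\partial/\partial t_{n+1}$ to a single term and using the product rule produces one new term for each block of $\sigma$ (the variable $t_{n+1}$ joins that block) together with one term in which $\{t_{n+1}\}$ is a fresh singleton block. These are exactly the set partitions of $\{1,\dots,n+1\}$, each arising once, so no coefficients appear. The base case $n=1$ is the ordinary chain rule. Here the smoothness assumption guarantees both that all derivatives below exist and that mixed partials commute, the latter being what makes the collapse in the next step well defined.

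Next I would specialise the $t_j$ to the grouped values, turning $\partial^{\,n}/(\partial t_1\cdots\partial t_n)$ into $D^{k}$ and turning each block $B$ into the multiset $M_B$ whose multiplicity vector records how many indices of $B$ fall into each of the $p$ groups, so that $\partial^{\,\abs{B}}h/\prod_{j\in B}\partial t_j$ becomes $D^{\nu_{M_B}}h$. Two set partitions that induce the same multiset partition $\pi$ now contribute identical analytic terms, so I would collect them; the coefficient of the $\pi$--term is precisely the number of set partitions of $\{1,\dots,n\}$ that collapse to $\pi$, and the product over the blocks reassembles into $\prod_{j=1}^{\abs{\pi}}D^{\nu_{M_j}}h(x)$ with $\abs{\pi}=\abs{\sigma}$.

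The main obstacle, and the heart of the argument, is to show that this count equals $c(\pi)$. I would count by first treating the $\abs{\pi}$ blocks as distinguishable positions: distributing the distinct labels of group $d$ among the positions according to the prescribed multiplicities gives the multinomial coefficient $k_d!/\prod_i(\nu_{M_i})_d!$, and taking the product over $d$ yields $\nu_M!/\prod_i \nu_{M_i}!$. Finally I would quotient by the relabelling of equal blocks: since the labels are distinct and the blocks of a set partition are disjoint, no two equal--type blocks can share their label content, so the symmetry group of order $\nu_{\pi}!$ acts freely and every orbit has size exactly $\nu_{\pi}!$. Dividing gives $\nu_M!/(\prod_i \nu_{M_i}!\,\nu_{\pi}!)=c(\pi)$, which is the asserted formula. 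The delicate point to get right is this last division by $\nu_{\pi}!$, i.e.\ verifying that repeated blocks contribute no fixed points; everything else is a routine multinomial count.
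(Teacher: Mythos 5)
Your proposal is correct, and it supplies an actual argument where the paper gives none: the paper's ``proof'' of Theorem~\ref{theorem: Hardy's theorem} is the single line ``See \citet{hardy2006}'', so there is no in-text derivation to compare against. Your two-stage route is sound throughout. The coefficient-free induction for $n$ pairwise distinct variables is the standard one (the Leibniz step either adjoins $t_{n+1}$ to an existing block or creates the singleton block, producing each set partition of $\{1,\dots,n+1\}$ exactly once), and the fibre count for the collapse map is handled correctly at the one genuinely delicate spot: because the blocks of a set partition are disjoint and nonempty, two equal-type blocks necessarily have different contents, so the group of order $\nu_\pi!$ permuting equal-type positions acts freely on fillings, every orbit has size $\nu_\pi!$, and the multinomial count $\nu_M!/\prod_i \nu_{M_i}!$ divided by $\nu_\pi!$ gives exactly $c(\pi)$ as defined in the paper. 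This ``distinct variables first, then collapse'' strategy is in fact the viewpoint underlying the cited source --- the very term \emph{collapse number} comes from counting how many coefficient-one terms of the distinct-variable formula merge into one term of the multiset formula --- so you have reconstructed, independently and correctly, essentially the argument the paper outsources. The only step I would ask you to tighten in a final write-up is the specialisation: rather than ``setting the $t_j$ to grouped values'' informally, apply the distinct-variable formula to $\tilde h(t):=h\bigl(x+\sum_{j=1}^{n}t_j\,e_{d(j)}\bigr)$, where $d(j)$ is the group of index $j$, and evaluate at $t=0$; then $\partial^{\abs{B}}\tilde h/\prod_{j\in B}\partial t_j\big|_{t=0}=D^{\nu_{M_B}}h(x)$ and $\partial^{\,n}g(\tilde h)/\partial t_1\cdots\partial t_n\big|_{t=0}=D^{k}g(h(x))$ are immediate, and the commuting of mixed partials (guaranteed by the smoothness hypothesis, as you note) is exactly what makes the collapsed terms identical and the regrouping legitimate.
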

\begin{proof}
   See \citet{hardy2006}.
\end{proof}

%
%
%
\begin{corollary}[Cumulants as functions of moments] \label{corrolary: cumulants in terms of moments}
Let $\kappa_k$ be the k-th cumulant. Then
\begin{equation}
   \kappa_k = \sum _{\pi \in \Pi(k)} c(\pi)(-1)^{(\abs{\pi}-1)} (\abs{\pi}-1)!
	\prod_{j = 1}^p m_{\nu_{M_j}}\label{eqn: cumulants in terms of moments}.
\end{equation}
\end{corollary}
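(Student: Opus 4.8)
The plan is to recognise the cumulant $\kappa_k$ as the evaluation at the origin of the mixed partial derivative $D^k$ of the cumulant generating function $K_X(t) = \log M_X(t)$, and then to unfold this derivative with the chain rule of Theorem \ref{theorem: Hardy's theorem}. Taking $g = \log$ and $h = M_X$ in that theorem — which is legitimate since $M_X \colon \mathbb{R}^p \to \mathbb{R}$ is scalar valued, so that $\log$ acts as a genuine univariate outer function — gives
\begin{equation*}
   D^k \log M_X(t) = \sum_{\pi \in \Pi(k)} c(\pi)\, D^{\abs{\pi}}\log\bigl(M_X(t)\bigr) \prod_{j=1}^{\abs{\pi}} D^{\nu_{M_j}} M_X(t).
\end{equation*}
Everything then reduces to evaluating the two types of factor at $t = 0$.

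First I would handle the outer factor. Since $M_X(0) = 1$ and the univariate derivatives of the logarithm satisfy $\frac{d^n}{du^n}\log u = (-1)^{n-1}(n-1)!\, u^{-n}$ (a one-line induction on $n$), setting $u = M_X(0) = 1$ and $n = \abs{\pi}$ produces exactly the factor $(-1)^{\abs{\pi}-1}(\abs{\pi}-1)!$ appearing in \eqref{eqn: cumulants in terms of moments}. This is the step that manufactures the alternating sign and the factorial weight, and it is where the special structure of $\log$ enters.

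Next I would handle the inner factors. Writing the moment generating function as $M_X(t) = \mathbb{E}\bigl(e^{\langle t, X\rangle}\bigr)$ and differentiating under the expectation on a neighbourhood of the origin where $M_X$ is finite, each mixed partial derivative satisfies $D^{\nu_{M_j}} M_X(0) = \mathbb{E}(X^{\nu_{M_j}}) = m_{\nu_{M_j}}$, the raw moment indexed by the multiplicity vector of the block $M_j$. Substituting the two evaluations back into the chain-rule expansion and reading $\kappa_k = D^k \log M_X(t)\big|_{t=0}$ then yields the claimed identity.

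I expect the only genuinely delicate point to be bookkeeping rather than analysis: one must verify that the combinatorial index set $\Pi(k)$, the collapse numbers $c(\pi)$, and the block multiplicities $\nu_{M_j}$ are transported verbatim from Theorem \ref{theorem: Hardy's theorem} into the moment formula, so that the product indeed runs over the $\abs{\pi}$ blocks of each partition — the displayed upper limit $p$ in \eqref{eqn: cumulants in terms of moments} should be read as $\abs{\pi}$. The analytic ingredients, namely the value $M_X(0)=1$, differentiation under the expectation, and the elementary derivative formula for $\log$, are all routine given the stated assumptions.
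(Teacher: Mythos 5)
Your proof is correct and follows exactly the paper's route: the paper's own proof is the one-line instruction to set $g(h)=\log(h)$, $h(t)=M_X(t)$ in Theorem \ref{theorem: Hardy's theorem} and evaluate at $t=0$, which is precisely what you carry out, with the evaluations $M_X(0)=1$, $\frac{d^n}{du^n}\log u=(-1)^{n-1}(n-1)!\,u^{-n}$, and $D^{\nu_{M_j}}M_X(0)=m_{\nu_{M_j}}$ spelled out. Your observation that the displayed upper limit $p$ in \eqref{eqn: cumulants in terms of moments} should read $\abs{\pi}$ is also a correct catch of a typo in the statement.
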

\begin{proof}
   Set $g(h)=\log(h)$, \; $h(t)=M_X(t)$ and evaluate at $t=0$.
\end{proof}
\begin{example}[Partial derivative]
	Consider the partial derivative $\frac{\partial^3}{\partial x \partial z^2}g(h(x,y,z)$.
	The associated multiset is $\{1,3,3\}$ with partitions
	$\{133\}$,
	$\{13|3\}$,
	$\{1|33\}$,
	$\{1|3|3\}$.
   The multivariate chain rule tells us that
	\begin{align*}
		D^{102}g(h(x,y,z))
		& =
		DgD^{102}h\\
		& \quad +
		2D^2gD^{101}hD^{001}h\\
		& \quad +
		D^2gD^{100}hD^{002}h\\
		& \quad +
		D^3gD^{100}h(D^{001}h)^2,
	\end{align*}
	where function arguments have been suppressed on the right hand side for better readability.
   In particular we may conclude that 
	\begin{align*}
		\kappa_{102}  
		=
		m_{102}
		- 
		2 m_{101}m_{001}
		- 
		m_{100}m_{002}
		+
		m_{100}m^2_{001}.
	\end{align*}
\end{example}
The expression for cumulants in terms of moments is particularly 
simple in what we shall call the square-free case, that is for 
cumulants $\kappa_k$, whose index vector $k$ is binary.
In that case, the multiset associated with $k$ is degenerate
and $c(\pi)=1$.
Equation \eqref{eqn: cumulants in terms of moments} simplifies to
\begin{equation*}
   \kappa_k = \sum _{\pi \in \Pi(k)} (-1)^{(\abs{\pi}-1)} (\abs{\pi}-1)!
	\prod_{j = 1}^p m_{\nu_{M_j}}.
\end{equation*}
In this form it is often stated and derived via the classical 
Faa Di Bruno formula applied to an exponential function 
followed by a Moebius inversion \citep[see e.g.][]{barndorff1989}. 

Local analogues to moments and cumulants can be derived as one considers
their limiting counterparts in the neighborhood 
of a fixed point $\xi \text{ in } \mathbb{R}^p$, 
an idea proposed by \citet{mueller2001}.
This section derives formulae for local moments and cumulants and 
local moment generating functions 
provided its global counterpart exists.

For a strictly positive edge length $\epsilon \text{ in } \mathbb{R}_+$, 
let $A(a, \epsilon):=[\xi-\frac{\epsilon}{2}, \xi+\frac{\epsilon}{2}]^p$ 
denote the hyper cube centralized at $\xi$.  
Let $\abs{A} =\epsilon^p$ denote its volume.
The density of the random variable $X\text{ in } \mathbb{R}^p$ 
conditional on being in $A$ is given by 
\begin{equation*}
	f^A_X(x) = \frac{f_X(x)\ind_A(x)}{pr(X\in A)},
\end{equation*}
where $\ind_A(x)$ is the indicator function which returns unity if 
$x$ is in $A$ and zero otherwise. 
The conditional moments about $\xi$ are denoted by 
\begin{equation*}
	m^A_k = \mathbb{E}\bigg(\prod_{i=1}^p (X_i-\xi_i)^{k_i} \sVert[2] X \in A\bigg).
\end{equation*}
Let $2\mathbb{N}$ and $2\mathbb{N}\hspace{-3pt}+\hspace{-3pt}1$
denote the set of positive even and odd integers respectively.
For symmetry reasons, even and odd orders of individual 
components have different effects on local moments,
which motivates the following definition:
\begin{equation*}
   \norm{\alpha}^+_1:=\norm{\alpha}_1+\sum_{i=1}^p \ind \;(\alpha_i \odd).
\end{equation*}
$\norm{\cdot}^+_1$ increments the total sum of the components of a vector 
by one additional unit for each odd component (it is 
not to be interpreted as a norm).

\begin{theorem}[Local moments]\label{theorem: local moment}
Let $X \text{ in } \mathbb{R}^p$ be an absolutely continuous random 
vector with density $f_X$ which is $p$ times differentiable 
in $\xi \text{ in } \mathbb{R}^p$.
Let $k \text{ in } \mathbb{N}^p$ determine the order of moment.
Then, for $\abs{A}$ sufficiently small, 
$X$ has local moment
\begin{align}
	m^A_k
	& = 
	r(\epsilon, k)
	\bigg(
		\frac
		{
			D^
			{
				\alpha
			}
			f_X(\xi)
		}
		{
			f_X(\xi)
		} 
		+O(\epsilon^2)
	\bigg),
	\label{eqn: result local moment}
\end{align}
where
$
r(\epsilon, k):=
	\epsilon^
	{
		\norm{k}_1^+
	}
	{\displaystyle \prod_{\substack{i=1, \\ k_i \even}}^p}
	\frac
	{
	   1
	}
	{
		k_i+1
	}
	{\displaystyle \prod_{\substack{i=1, \\ k_i \odd}}^p}
	\frac
	{
		1
	}
	{
		k_i+2
	}
$
and
$
   \alpha:= {\displaystyle \sum_{\substack{i=1, \\ k_i \odd}}^p} e_i
$
.
\end{theorem}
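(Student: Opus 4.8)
The plan is to write the conditioning expectation defining $m^A_k$ as a ratio of two integrals over the cube $A$, to Taylor-expand the density about $\xi$, and then to read off the leading behaviour in $\epsilon$ by exploiting the symmetry of the cube. After the change of variables $y=x-\xi$, which carries $A$ onto the symmetric cube $C_\epsilon:=[-\epsilon/2,\epsilon/2]^p$, the conditional moment becomes
\[
m^A_k=\frac{\int_{C_\epsilon}\left(\prod_{i=1}^p y_i^{k_i}\right)f_X(\xi+y)\,dy}{\int_{C_\epsilon}f_X(\xi+y)\,dy}.
\]
Since $f_X$ is $p$ times differentiable at $\xi$, I would insert the Taylor expansion $f_X(\xi+y)=\sum_{\norm{\beta}_1\le p}\frac{D^{\beta}f_X(\xi)}{\beta!}\,y^{\beta}+R(y)$ into both integrals, where $\beta!:=\prod_i\beta_i!$, $y^{\beta}:=\prod_i y_i^{\beta_i}$ and $R$ is the Peano remainder. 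The assumed differentiability is exactly what is needed to make $D^{\alpha}f_X(\xi)$ well defined, since $\norm{\alpha}_1$ equals the number of odd coordinates of $k$ and is therefore at most $p$.

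The engine of the argument is the parity remark that over a symmetric cube a monomial integrates to zero as soon as one of its exponents is odd: $\int_{C_\epsilon}y^{m}\,dy=0$ whenever some $m_i\odd$, by oddness in that coordinate. In the numerator the monomial attached to the coefficient $D^{\beta}f_X(\xi)$ is $y^{k+\beta}$, so only those $\beta$ with $k_i+\beta_i$ even for every $i$ survive. For each coordinate the smallest admissible exponent is $\beta_i=0$ when $k_i\even$ and $\beta_i=1$ when $k_i\odd$, which singles out the leading index $\alpha=\sum_{k_i\odd}e_i$, with $\alpha!=1$. The surviving one-dimensional integrals are elementary, $\int_{-\epsilon/2}^{\epsilon/2}y^{m}\,dy=\frac{2}{m+1}\left(\tfrac{\epsilon}{2}\right)^{m+1}$ for even $m$, evaluated at $m=k_i$ on even coordinates and $m=k_i+1$ on odd ones; collecting these across $i$ produces the factors $1/(k_i+1)$ and $1/(k_i+2)$ together with the total power $\epsilon^{\norm{k}_1^+}$, so that after dividing by the normalisation they assemble into $r(\epsilon,k)$.

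For the denominator the same parity rule leaves only even $\beta$; its leading term $\beta=0$ gives $f_X(\xi)\,\epsilon^{p}$, and the next admissible index has $\norm{\beta}_1=2$, so the denominator equals $f_X(\xi)\,\epsilon^{p}\left(1+O(\epsilon^2)\right)$. Dividing the leading numerator term by $f_X(\xi)\,\epsilon^{p}$ yields $r(\epsilon,k)\,D^{\alpha}f_X(\xi)/f_X(\xi)$, and since $1/(1+O(\epsilon^2))=1+O(\epsilon^2)$ the denominator contributes only an $O(\epsilon^2)$ relative error. The decisive point is that the first genuine correction in the numerator also sits at relative order $\epsilon^2$: any admissible $\beta$ beyond $\alpha$ must raise some coordinate by an even amount to preserve parity, hence lifts the power of $\epsilon$ by at least two.

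I expect the main obstacle to be precisely this error bookkeeping, rather than the identification of the leading constant, which follows mechanically. One must confirm that the odd-order Taylor terms cancel under integration by symmetry, so that nothing of order $\epsilon$ survives and the naive $O(\epsilon)$ is genuinely upgraded to $O(\epsilon^2)$, and one must control the remainder $R$ with only the assumed smoothness; in the extreme case where every $k_i$ is odd, $\norm{\alpha}_1=p$ exhausts the available derivatives, so the vanishing of the order-$\norm{\alpha}_1{+}1$ contributions by parity is what keeps the remainder from producing a first-order term. Throughout I would use $f_X(\xi)\neq 0$ (the strict positivity invoked later) to ensure the ratio and its expansion are legitimate once $\abs{A}$ is sufficiently small.
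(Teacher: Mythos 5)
Your proposal follows exactly the route the paper's (three-line) proof sketches: write $m^A_k$ as a ratio of two integrals over the cube, Taylor-expand $f_X$ about $\xi$, integrate term by term, and use the point symmetry of the cube to annihilate every monomial with an odd exponent. You supply the details the paper leaves implicit --- the identification of the minimal surviving index $\alpha$, the expansion of the denominator, and the parity argument showing that admissible corrections enter at relative order $\epsilon^2$ --- so in structure and substance the two arguments agree.

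One piece of bookkeeping does not close as you claim. Your one-dimensional integrals are correct,
\begin{equation*}
\int_{-\epsilon/2}^{\epsilon/2} y^m\,dy = \frac{2}{m+1}\Big(\frac{\epsilon}{2}\Big)^{m+1} \quad (m \text{ even}),
\end{equation*}
but collecting them across coordinates and dividing by the denominator $f_X(\xi)\,\epsilon^p\,(1+O(\epsilon^2))$ produces the power $(\epsilon/2)^{\norm{k}_1^+}$, not $\epsilon^{\norm{k}_1^+}$: the leading constant your computation actually yields is $2^{-\norm{k}_1^+}\,r(\epsilon,k)$, so the assertion that the factors ``assemble into $r(\epsilon,k)$'' is off by that power of two. (Check $p=1$, $k=1$: your integrals give $m^A_1 = \frac{\epsilon^2}{12}\,\frac{f_X'(\xi)}{f_X(\xi)} + O(\epsilon^4)$, whereas the theorem's $r(\epsilon,1)=\epsilon^2/3$.) In fact the paper's stated $r(\epsilon,k)$ is the constant appropriate to a cube of \emph{half-width} $\epsilon$, i.e.\ $[\xi-\epsilon,\xi+\epsilon]^p$, which conflicts with the paper's own definition $A=[\xi-\frac{\epsilon}{2},\xi+\frac{\epsilon}{2}]^p$; carried through honestly, your computation exposes this discrepancy rather than reproducing the claimed constant, and you should state the corrected constant instead of asserting the match. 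A second caveat, which you half-acknowledge as the ``main obstacle'': when every $k_i$ is odd, $\norm{\alpha}_1=p$ and the Peano remainder $o(\norm{y}^p)$ gives only an $o(1)$ relative error; parity does kill the order-$(p+1)$ terms, which under the section's standing $C^{p+1}$ assumption upgrades this to $o(\epsilon)$, but the stated $O(\epsilon^2)$ genuinely requires two more derivatives than the theorem hypothesizes, so your parity remark does not fully rescue it. Both points are defects of the paper's statement as much as of your proof; for the subsequent definition of differential moments via the limit $\epsilon\rightarrow 0$, the weaker error term suffices.
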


\begin{proof}
Consider
\begin{align} 
	m^A_k
	& = 
	\frac{
		\int_A\prod_{i=1}^p(x_i-\xi_i)^{k_i} f_X(x) dx 
	}
	{
		\int_A f_X(x) dx 
	}
   \label{eqn: definition local moment}
\end{align}
Approximate $f_X$ through its $p$-th order Taylor expansion, 
integrate \eqref{eqn: definition local moment} term by term 
and exploit the point symmetry of odd order terms 
about the origin. 
\end{proof}

\begin{example}[Local moment $m_{120}$]
Consider a tri-variate random variable $X$ 
with local moment 
$
   m_{120}^A=E((X_1-\xi_1)(X_2-\xi_2)^2|X\in A).
$
Then 	
$
	r(\epsilon, k)=\frac{\epsilon^4}{9},
   \alpha:= (1,0,0)'
$
and we obtain	
\begin{equation*}
	m_{120}^A=\frac{\epsilon^4}{9}\frac{\partial f(x_1,x_2,x_3)}{\partial x_1}+O(\epsilon^6).
\end{equation*}
\end{example}

A natural way to extend the concept of a local moment 
is to consider the limiting case when $\epsilon \rightarrow 0$.
This leads to our definition of differential moments.
\begin{definition}[Differential moment]
The differential moment of an absolutely continuous random vector 
$X \text{ in } \mathbb{R}^p$ in $\xi \text{ in } \mathbb{R}^p$ is defined as: 
\begin{align*}
	m^\xi_k
   := 
	\lim_{\epsilon \rightarrow 0}
	\frac
	{
		m^A_k
	}
	{
		r(\epsilon, k)
	}.
\end{align*}
\end{definition}
\begin{corollary}[Differential moment]
For a differential moment of order 
$k \text{ in } \mathbb{N}^p$ in $\xi \text{ in } \mathbb{R}^p$ it holds that
\begin{align*}
	m^\xi_k
	=
	\frac
	{
		D^
		{
			\alpha
		}
		f_X(\xi)
	}
	{
		f_X(\xi)
	} 
	.
\end{align*}
\end{corollary}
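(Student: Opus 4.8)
The plan is to substitute the asymptotic expansion furnished by Theorem \ref{theorem: local moment} directly into the defining limit of the differential moment and then let $\epsilon \to 0$. Recall that the differential moment is defined as $m^\xi_k = \lim_{\epsilon \to 0} m^A_k / r(\epsilon,k)$, while Theorem \ref{theorem: local moment} states that, for $\abs{A}$ sufficiently small,
\[
	m^A_k = r(\epsilon,k)\left(\frac{D^\alpha f_X(\xi)}{f_X(\xi)} + O(\epsilon^2)\right),
\]
with the very same multi-index $\alpha = \sum_{i:\,k_i \,\text{odd}} e_i$ appearing in both statements. The scaling factor $r(\epsilon,k)$ is the crucial device: it is a positive power of $\epsilon$ multiplied by strictly positive rational constants, hence nonzero for every $\epsilon > 0$.

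First I would divide both sides of the displayed expansion by $r(\epsilon,k)$, which is legitimate precisely because this factor does not vanish, to obtain
\[
	\frac{m^A_k}{r(\epsilon,k)} = \frac{D^\alpha f_X(\xi)}{f_X(\xi)} + O(\epsilon^2).
\]
The leading term on the right is a fixed quantity independent of $\epsilon$; here I invoke the standing assumption that $f_X$ is strictly positive, so the denominator $f_X(\xi)$ does not vanish and the ratio is well defined. Passing to the limit $\epsilon \to 0$ then leaves only this leading term, which is exactly the asserted identity.

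The sole point requiring any care is the legitimacy of discarding the $O(\epsilon^2)$ remainder in the limit. This follows at once from the meaning of the Landau symbol: there exist constants $C, \epsilon_0 > 0$ such that the remainder is bounded in absolute value by $C\epsilon^2$ for all $0 < \epsilon < \epsilon_0$, so it is $o(1)$ and vanishes as $\epsilon \to 0$. I do not anticipate any genuine obstacle; the corollary is essentially a repackaging of the theorem's expansion, the factor $r(\epsilon,k)$ having been chosen so that the leading differential behaviour survives the limit while all higher-order contributions are washed out.
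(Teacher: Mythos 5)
Your proposal is correct and is essentially the paper's own argument: the paper likewise proves the corollary by taking the limit $\epsilon \to 0$ in Theorem \ref{theorem: local moment} after dividing by $r(\epsilon,k)$, exactly as you do. Your added care about the nonvanishing of $r(\epsilon,k)$ and $f_X(\xi)$ and the $O(\epsilon^2)$ remainder merely fills in details the paper leaves implicit.
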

\begin{proof}
	This follows from Theorem \ref{theorem: local moment} upon taking the limit as $\epsilon \rightarrow 0$.
\end{proof}
From \eqref{eqn: result local moment} it is clear 
that the choice of $\alpha$ in the derivative $D^{\alpha} f_X$
depends only on 
the pattern of odd and even components of the moment.
To be precise, $\alpha$ holds a unity corresponding 
to odd components and a zero corresponding to even component entries.
Consequently, the differential moment $m^\xi_k$ depends on $k$ only via 
the pattern of odd and even values. 

This suggests defining an 
equivalence relation on $\mathbb{N}^p\times\mathbb{N}^p$:
For $u, k \in \mathbb{N}^p$ set
$u \sim_{m} k \iff m_{u_1\cdots u_p} = m_{k_1\cdots k_p}$. 
The relation $\sim_{m}$ partitions the product space $\mathbb{N}^p\times\mathbb{N}^p$
into $2^p$ equivalence classes of same differential moments.
The graph corresponding to $\sim_m$ is depicted in 
Figure \ref{fig: equivalence graph} for the bivariate case.
The axes give the order of the moment for the two components.
Different symbols represent different equivalence classes. 
For instance, $(2,2) \sim_m (4,2)$, since $m^\xi_{22}=m^\xi_{42}$.
Note that $u \sim_{m} k \iff \norm{u-k}_1 \even $. 

\begin{figure}
	\begin{center}
		\epsfig{file= 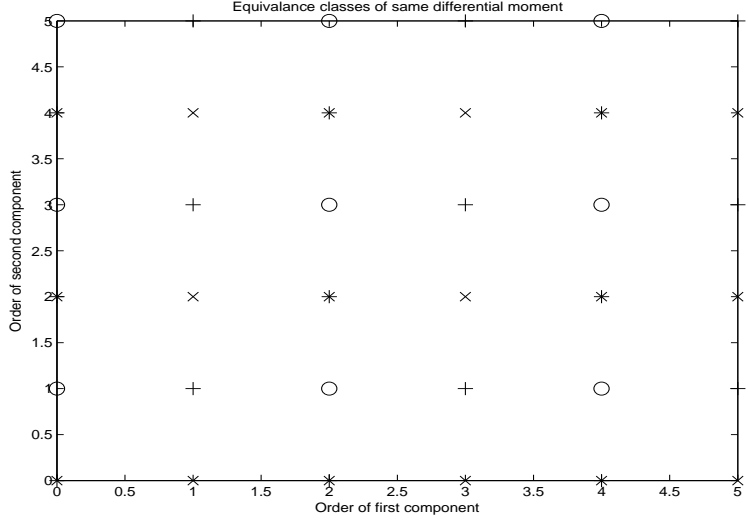,width=10cm,height=7cm}
	\end{center}
	\caption[Equivalent classes with same differential moments]
	{
	   Graph of the equivalence classes induced by
		$\sim_m$ (bivariate case).  
		Each equivalence class is depicted with a different symbol. 
	}
	\label{fig: equivalence graph}
\end{figure}

Similarly to local moments, for any measurable set $A$ we can define a local moment generating function:
\begin{equation*}
   M_X^A(t):=\mathbb{E}(e^{t'X}|X\in A).
\end{equation*}
Being a conditional expectation, 
it exists if $M_X$ exists. 
We have the following expansion: 

\begin{align*}
	M^A_X(t)
	& =
	\frac{1}{pr(X \in A)}
	\int_A
	e^{\sum_{i=1}^pt_ix_i}f_X(x)dx\\
	& =
		%
	1
	+ 
	\sum_{i=1}^p t_i\eval{\frac{\partial f_X(x)}{\partial x_i}}_{x=\xi} \bigg(\frac{\epsilon^2}{3f_X(\xi)}+O(\epsilon^4)\bigg) \\
	& + 
   \sum_{i=1}^p t_i^2\eval{\frac{\partial^2 f_X(x)}{\partial x_i^2}}_{x=\xi} \bigg(\frac{\epsilon^2}{6f_X(\xi)}+O(\epsilon^4)\bigg)\\
	& + 
   \sum_{i=1}^p\sum_{j>i}^p t_it_j\eval{\frac{\partial^2 f_X(x)}{\partial x_i\partial x_j}}_{x=\xi}
	\bigg(
		\frac{\epsilon^4}{9f_X(\xi)}+O(\epsilon^6)
	\bigg)
	+O(\epsilon^4\norm{t^3}).
\end{align*}
The local moments can be computed from the local moment generating function via 
differentiation to appropriate order and evaluation at $t=0$.
The natural logarithm of the local moment generating function defines the 
local cumulant generating function $K^A_X(t):\mathbb{R}^p\longrightarrow \mathbb{R}$: 
\begin{equation*}
	K^A_X(t):=\log(M^A_X(t)).
\end{equation*}
\begin{corollary}[Local cumulants]\label{corollary: local cumulants}
Under the conditions of Theorem \ref{theorem: local moment} it holds
for the local cumulants that
\begin{align*}
   \kappa_k^A
	=
   \sum _{\pi \in \Pi(k)} 
	c(\pi) (-1)^{(\abs{\pi}-1)} (\abs{\pi}-1)!
	\prod_{j=1}^{\abs{\pi}}
	r(\epsilon, \nu_{M_j})
	\bigg(
		\frac
		{
			D^{\alpha_j}f_X(\xi)	
		}
		{
			f_X(\xi)
		}
		+ O(\epsilon^2)
	\bigg)
	,
\end{align*}
where $\alpha_j$ is a function of the partition $\pi$ and defined as 
\begin{align*}
   \alpha_j:={\displaystyle \sum_{i=1}^p e_i \ind{\bigg(\nu_{M_j}(i) \odd\bigg)}},
\end{align*}
that is, $\alpha_j$ is binary and holds ones corresponding to odd elements of $\nu_{M_j}$.
Furthermore,
\begin{align*}
	r(\epsilon, \nu_{M_j})
	&:=
	\epsilon^
	{
		\norm{\nu_{M_j}}_1^+
	}
	{\displaystyle \prod_{\substack{i=1, \\ \nu_{M_j}(i) \even}}^p}
	\frac
	{
	   1
	}
	{
		\nu_{M_j}(i)+1
	}
	{\displaystyle \prod_{\substack{i=1, \\ \nu_{M_j}(i) \odd}}^p}
	\frac
	{
		1
	}
	{
		\nu_{M_j}(i)+2
	}
	.
\end{align*}
\end{corollary}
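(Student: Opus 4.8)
The plan is to combine two facts that the excerpt has already established: the combinatorial expression of cumulants in terms of moments from Corollary \ref{corrolary: cumulants in terms of moments}, and the asymptotic formula for local moments from Theorem \ref{theorem: local moment}. The central observation is that the local cumulant generating function $K^A_X(t)=\log M^A_X(t)$ stands in exactly the same relationship to the local moment generating function $M^A_X(t)$ as $K_X$ does to $M_X$ globally. Since $M^A_X$ is genuinely the moment generating function of the conditional law of $X$ given $X\in A$, nothing in the derivation of Corollary \ref{corrolary: cumulants in terms of moments} — which rests only on setting $g=\log$, $h=M^A_X$ in the multivariate chain rule of Theorem \ref{theorem: Hardy's theorem} and evaluating at $t=0$ — uses any property special to the unconditional distribution. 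Hence the same identity holds verbatim with each global moment $m_{\nu_{M_j}}$ replaced by its local counterpart $m^A_{\nu_{M_j}}$.

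First I would record this conditional analogue of Corollary \ref{corrolary: cumulants in terms of moments}, namely
\[
   \kappa_k^A = \sum_{\pi \in \Pi(k)} c(\pi)(-1)^{(\abs{\pi}-1)}(\abs{\pi}-1)!\,\prod_{j=1}^{\abs{\pi}} m^A_{\nu_{M_j}}.
\]
Then I would substitute the local-moment expansion of Theorem \ref{theorem: local moment} into each factor of the product. Applied with order vector $\nu_{M_j}$, that theorem yields $m^A_{\nu_{M_j}} = r(\epsilon,\nu_{M_j})\bigl(D^{\alpha_j}f_X(\xi)/f_X(\xi)+O(\epsilon^2)\bigr)$, where $\alpha_j$ is precisely the binary vector flagging the odd entries of $\nu_{M_j}$ — this is the same $\alpha$ of Theorem \ref{theorem: local moment}, now indexed by the $j$-th block of the partition $\pi$. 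Substituting block by block and retaining the product structure reproduces the claimed expression directly.

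The main point to verify carefully is the propagation of the error terms through the finite products, together with the bookkeeping check that the definitions of $\alpha_j$ and $r(\epsilon,\nu_{M_j})$ given in the statement match what Theorem \ref{theorem: local moment} produces when its order vector is taken to be $\nu_{M_j}$. Both are routine: each factor carries a relative $O(\epsilon^2)$ correction, and since $\Pi(k)$ is finite and every partition has finitely many blocks, the products stay well controlled and the $O(\epsilon^2)$ structure is preserved within each summand. The only genuine conceptual step is recognizing that the moment--cumulant combinatorics transfers unchanged to the conditional setting; once that is granted, the corollary follows by immediate substitution, so I do not expect a serious obstacle beyond this transfer argument.
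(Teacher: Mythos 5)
Your proposal is correct and follows essentially the same route as the paper, whose entire proof reads ``Combine the chain rule and Theorem \ref{theorem: local moment}'': you apply the moment--cumulant combinatorics of Corollary \ref{corrolary: cumulants in terms of moments} to the conditional (local) moments and then substitute the expansion of Theorem \ref{theorem: local moment} blockwise. Your added care about transferring the combinatorics to the conditional law and propagating the $O(\epsilon^2)$ terms through the finite products simply makes explicit what the paper leaves implicit.
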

\begin{proof}
Combine the chain rule and Theorem \ref{theorem: local moment}.
\end{proof}

Similarly to differential moments we can define differential cumulants at $\xi$.
Two different ways of doing so are natural.
First, taking the limiting quantity of the local cumulants as 
$\epsilon \rightarrow 0$ or, second,
taking the series of differential moments and requiring 
that the mapping between moments and cumulants is preserved 
which is induced through the ex-log relation of the 
associated generating functions, see 
also the discussion in \cite[page 62]{mccullagh1987}.

As demonstrated below, the two quantities just described 
differ in general and
coincide only in the square-free case. 
In order to retain the intuitive and familiar relation between 
cumulants and moments, we define differential cumulants in 
terms of differential moments.
\begin{definition}[Differential cumulant]
For an index vector $k \text{ in } \mathbb{N}^p$, the differential cumulant
in $a \text{ in } \mathbb{R}^p$ is defined as
\begin{align*}
   \kappa_k^a
	:=
   \sum _{\pi \in \Pi(k)} 
	c(\pi) (-1)^{(\abs{\pi}-1)} (\abs{\pi}-1)!
	\prod_{i=1}^{\abs{\pi}}
	m^a_{\nu_{M_i}}.
\end{align*}
\end{definition}
We are now in a position to state the main 
result of this section, namely that mixed partial 
derivatives of the log density can be interpreted 
as differential cumulants. 
\begin{lemma}[Differential cumulant] \label{lemma: Differential cumulants}
For a differential cumulant in $\xi \text{ in } \mathbb{R}^p$
of order $k \text{ in } \mathbb{N}^p$  it holds that
\begin{align*}
	\kappa^\xi_k
	=
	D^
	{
		\alpha
	}
	\log
	(
		f_X(\xi)
	)
	,
\end{align*}
where
$
   \alpha:= {\displaystyle \sum_{\substack{i=1, \\ k_i \odd}}^p} e_i
$
projects odd elements of $k$ onto one and even elements of $k$ onto zero. 
\end{lemma}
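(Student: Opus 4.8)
The plan is to turn the definition of $\kappa_k^\xi$ into an instance of the multivariate chain rule of Theorem~\ref{theorem: Hardy's theorem} by inserting the closed form for the differential moments. First I would take each factor $m^\xi_{\nu_{M_i}}$ in the defining sum and rewrite it, via the corollary on differential moments, as $m^\xi_{\nu_{M_i}} = D^{\alpha_i} f_X(\xi)/f_X(\xi)$, where $\alpha_i$ is the binary parity pattern of the block multiplicity $\nu_{M_i}$ (ones in the odd positions, zeros in the even ones). Factoring the $\abs{\pi}$ copies of $f_X(\xi)^{-1}$ out of the product then rewrites the contribution of each partition $\pi\in\Pi(k)$ as $c(\pi)(-1)^{\abs{\pi}-1}(\abs{\pi}-1)!\,f_X(\xi)^{-\abs{\pi}}\prod_{i} D^{\alpha_i} f_X(\xi)$.

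The second step is to read the scalar weight as a derivative of the outer function $g=\log$. Since $D^{n}\log(h)=(-1)^{n-1}(n-1)!\,h^{-n}$, the factor $(-1)^{\abs{\pi}-1}(\abs{\pi}-1)!\,f_X(\xi)^{-\abs{\pi}}$ is exactly $D^{\abs{\pi}}g(h)$ evaluated at $h=f_X(\xi)$. Hence $\kappa_k^\xi$ is literally the chain-rule expansion produced by Theorem~\ref{theorem: Hardy's theorem}, only with the inner function taken to be $f_X$ and evaluated pointwise at $\xi$, rather than the moment generating function evaluated at $0$; this is the same mechanism that yields Corollary~\ref{corrolary: cumulants in terms of moments}. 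On the other side, applying the chain rule directly to $D^{\alpha}\log f_X(\xi)$ with $\alpha$ the parity pattern of $k$ gives a sum over $\Pi(\alpha)$ in which, because $\alpha$ is square-free, every $c(\sigma)=1$ and every block multiplicity is binary. The lemma therefore reduces to matching these two chain-rule sums.

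The heart of the argument, and the step I expect to be the main obstacle, is the combinatorial identity that collapses the sum over $\Pi(k)$, with each block replaced by its parity pattern, onto the sum over $\Pi(\alpha)$. The structural fact I would exploit is that differential moments are constant on the classes of $\sim_m$, so any block of all-even multiplicity contributes the trivial factor $m^\xi_0 = D^0 f_X(\xi)/f_X(\xi) = 1$ and drops out of the product, while each coordinate that is odd in $k$ stays odd in an odd number of blocks. I would group the partitions of $k$ by the parity data of their blocks and show that, after summing the weights $c(\pi)(-1)^{\abs{\pi}-1}(\abs{\pi}-1)!$ inside each group, one recovers the corresponding weight $(-1)^{\abs{\sigma}-1}(\abs{\sigma}-1)!$ in $\Pi(\alpha)$. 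This reorganization is immediate in the square-free case, where the multiset is degenerate, every $c(\pi)=1$, the parity projection is the identity, and (as already noted) the two candidate notions of differential cumulant agree; all the delicate bookkeeping sits in the treatment of repeated indices, i.e. the parity collapse of the even-order blocks, which is precisely where the moment-based and limit-based differential cumulants can part company.
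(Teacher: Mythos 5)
Your first two paragraphs are, modulo direction, exactly the paper's own (one-line) proof: the paper simply says ``apply the chain rule to $D^{\alpha}\log(f_X(\xi))$'', i.e.\ expand via Theorem \ref{theorem: Hardy's theorem} with outer function $\log$, use $D^{n}\log(h)=(-1)^{n-1}(n-1)!\,h^{-n}$ together with $D^{\nu_{M_j}}f_X(\xi)/f_X(\xi)=m^{\xi}_{\nu_{M_j}}$ for binary block multiplicities, and recognise the defining sum of the differential cumulant. In the square-free case $k=\alpha$, where $\Pi(k)=\Pi(\alpha)$, every $c(\pi)=1$ and every block multiplicity is binary, this match is term-by-term and the argument closes; your reconstruction of that part is correct and is essentially the paper's proof run in reverse.

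The gap is your third paragraph, and it is not merely an unproved step: the parity-collapse identity you propose --- group the partitions in $\Pi(k)$ by the parity data of their blocks, let all-even blocks ``drop out'' as factors $m^{\xi}_{0}=1$, and show the grouped weights $c(\pi)(-1)^{\abs{\pi}-1}(\abs{\pi}-1)!$ reproduce the weights over $\Pi(\alpha)$ --- is false. An all-even block contributes the factor $1$ to the product but still increments $\abs{\pi}$ and hence changes the weight $(-1)^{\abs{\pi}-1}(\abs{\pi}-1)!$; worse, two odd copies of the same index can be split across different blocks, producing blocks whose parity patterns sum to strictly more than $\alpha$ and which therefore have no counterpart in $\Pi(\alpha)$ at all. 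Concretely, take $p=1$ and $k=2$, so $\alpha=0$: the partitions of the multiset $\{1,1\}$ are $\{11\}$ and $\{1|1\}$, both with $c(\pi)=1$, so the definition gives $\kappa^{\xi}_{2}=m^{\xi}_{2}-(m^{\xi}_{1})^{2}=1-\left(f_X'(\xi)/f_X(\xi)\right)^{2}$, which is not $D^{0}\log f_X(\xi)=\log f_X(\xi)$; here the partition $\{1|1\}$ has two odd blocks whose parity patterns sum to $2e_1$, not to $\alpha=0$, and its weight $-1$ survives uncancelled. (Similarly one checks $\kappa^{\xi}_{(1,2)}$ vanishes under independence while $\partial_1\log f_X$ does not.) So no bookkeeping can make your collapse go through for non-binary $k$: under the paper's definitions the asserted identity holds exactly when $k$ is binary --- consistent with your own closing remark that the repeated-index case is where the moment-based and limit-based notions part company --- and the paper's chain-rule proof, like your first two paragraphs, establishes precisely that square-free case, which is also all that the zero-cumulant machinery of the later sections uses.
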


\begin{proof}
Apply the chain rule to 
$
	D^
	{
		\alpha
	}
	\log
	(
		f_X(\xi)
	)
$.
\end{proof}
This is a multivariate generalization of the local dependence function 
introduced by \cite{holland1987}.
The next theorem relates differential cumulants to 
the limit of local cumulants.
\begin{theorem}[Differential and limiting local cumulant]
A differential cumulant $\kappa^\xi_k$ equals the limit
of the local cumulant 
$
	lim_{\epsilon \rightarrow 0} \frac{1}{r(\epsilon,k)}\kappa^A_k
$
if and only if $k$ is binary, i.e. $\kappa_k$ is a square-free cumulant. 
\end{theorem}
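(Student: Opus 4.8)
The plan is to compare the two sums term by term over the common index set $\Pi(k)$. Both $\kappa^\xi_k$ and $\kappa^A_k$ carry the same combinatorial weight $w(\pi):=c(\pi)(-1)^{\abs{\pi}-1}(\abs{\pi}-1)!$ on each partition $\pi$, and differ only in the per-block factors: the differential cumulant uses $\prod_j m^\xi_{\nu_{M_j}}$, whereas the local cumulant uses $\prod_j r(\epsilon,\nu_{M_j})\big(D^{\alpha_j}f_X(\xi)/f_X(\xi)+O(\epsilon^2)\big)$. Writing $r(\epsilon,\nu)=\epsilon^{\norm{\nu}^+_1}C_\nu$ with $C_\nu:=\prod_{\nu_i\even}(\nu_i+1)^{-1}\prod_{\nu_i\odd}(\nu_i+2)^{-1}$, the contribution of a fixed $\pi$ to $r(\epsilon,k)^{-1}\kappa^A_k$ is of order $\epsilon^{e(\pi)}$, where $e(\pi):=\sum_j\norm{\nu_{M_j}}^+_1-\norm{k}^+_1$. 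The first step is therefore to determine the sign of $e(\pi)$.

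First I would establish the counting identity behind the exponent. Since $\pi$ is a partition we have $\sum_j\nu_{M_j}=k$ coordinatewise, so $\sum_j\norm{\nu_{M_j}}_1=\norm{k}_1$ and hence $e(\pi)=\sum_j o(\nu_{M_j})-o(k)$, where $o(\nu):=\norm{\nu}^+_1-\norm{\nu}_1=\sum_i\ind(\nu_i\odd)$ counts odd entries. Coordinatewise, $k_i=\sum_j(\nu_{M_j})_i$ is odd exactly when an odd number of the summands $(\nu_{M_j})_i$ are odd, so the number of odd block-entries in coordinate $i$ is at least $\ind(k_i\odd)$; summing over $i$ yields $e(\pi)\ge 0$, with equality iff every odd coordinate of $k$ is odd in exactly one block and every even coordinate of $k$ is even in every block. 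Consequently $\lim_{\epsilon\to0}r(\epsilon,k)^{-1}\kappa^A_k$ exists, only the partitions with $e(\pi)=0$ survive, and each surviving term tends to $\rho(\pi)\,w(\pi)\prod_j m^\xi_{\nu_{M_j}}$ with constant ratio $\rho(\pi):=\prod_j C_{\nu_{M_j}}/C_k$, the $O(\epsilon^2)$ corrections dropping out.

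For the forward (square-free) direction, suppose $k$ is binary. Then every coordinate of $k$ is $0$ or $1$, so the equality condition above holds automatically and every partition survives; the associated multiset is degenerate, so $c(\pi)=1$; and each block $\nu_{M_j}$ is itself binary, whence $\alpha_j=\nu_{M_j}$ and $C_{\nu_{M_j}}=3^{-\norm{\nu_{M_j}}_1}$, giving $\rho(\pi)=3^{-\sum_j\norm{\nu_{M_j}}_1}/3^{-\norm{k}_1}=1$. Each term of the limit then coincides with the term $w(\pi)\prod_j m^\xi_{\nu_{M_j}}$ of the differential cumulant, and summing over $\Pi(k)$ gives $\lim_{\epsilon\to0}r(\epsilon,k)^{-1}\kappa^A_k=\kappa^\xi_k$.

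For the converse I would read both quantities as polynomials in the derivative-monomials $y_\beta:=D^\beta f_X(\xi)/f_X(\xi)$ indexed by binary $\beta$ (with $y_0=1$) and compare top degrees. A partition contributes a monomial of $y$-degree equal to its number of blocks with $\alpha_j\neq 0$, hence at most $\abs{\pi}\le\norm{k}_1$, and the maximal value $\norm{k}_1$ is attained only by the finest (all-singleton) partition, which produces the single monomial $\prod_i y_{e_i}^{k_i}$ with nonzero coefficient $w(\text{finest})$. When $k$ is not binary some coordinate exceeds $1$, so the finest partition has $e(\text{finest})=\norm{k}_1-o(k)>0$ and is suppressed in the limit, while no surviving partition reaches $y$-degree $\norm{k}_1$; thus this top-degree monomial has coefficient zero in the limit but not in $\kappa^\xi_k$. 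Since the $y_\beta$ can be prescribed independently by choosing the derivatives of the density at $\xi$, the two expressions cannot agree identically, which establishes the ``only if'' direction. The main obstacle is exactly this last step: making rigorous that a single mismatched coefficient forces the functionals to differ, i.e.\ that the derivative-monomials $y_\beta$ are sufficiently independent as functionals of $f_X$ that term-by-term matching is necessary; the $\epsilon$-order bookkeeping and the binary-case identities are otherwise routine.
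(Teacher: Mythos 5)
Your proof is correct, and while your forward direction is essentially the paper's argument --- your computation $\rho(\pi)=1$ for binary $k$ is exactly the factorization $r(\epsilon,k)=\prod_{j}r(\epsilon,\nu_{M_j})$ that the paper asserts before passing to the limit term by term --- your converse takes a genuinely different, and in fact sounder, route. The paper's converse singles out the one-block (degenerate) partition and claims its contribution converges to $c\,D^{k}f_X(\xi)/f_X(\xi)$, hence to something that is not a product of differential moments; but by the paper's own Theorem~\ref{theorem: local moment}, that term, normalized by $r(\epsilon,k)$, tends to $c(\pi)\,D^{\alpha}f_X(\xi)/f_X(\xi)$ with \emph{binary} $\alpha$, i.e.\ to a genuine differential moment, so the mismatch cannot be located there. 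Your exponent bookkeeping $e(\pi)\ge 0$ (the number of odd block entries in excess of the odd entries of $k$) finds the true source: the normalized local cumulant always has a limit, equal to the sum over partitions with $e(\pi)=0$ reweighted by $\rho(\pi)$ --- which, note, need not be $1$ even on surviving partitions (for $p=1$, $k=4$, $\pi=\{2|2\}$ one gets $\rho=5/9$) --- while partitions such as the finest one are suppressed; comparing the coefficient of the top-degree monomial $\prod_i y_{e_i}^{k_i}$ in your variables $y_\beta=D^{\beta}f_X(\xi)/f_X(\xi)$, contributed only by the all-singleton partition, then exhibits a nonzero polynomial difference. What your approach buys is an explicit formula for the limiting quantity and a concrete witness for non-equality; what the paper's buys is brevity, at the cost of a step that does not square with its own local-moment theorem. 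The realizability step you flag is genuinely needed (the ``only if'' is an identity over densities, so you must produce a density where the two sides differ) but routine: the normalized jet of $f_X$ at $\xi$ is in triangular bijection with the jet of $g=\log f_X$, and one can take $f_X\propto e^{q}$ near $\xi$ with $q$ a polynomial carrying prescribed low-order coefficients; alternatively the one-dimensional case $p=1$, $k=2$, where the limit is $1$ while $\kappa^{\xi}_2=1-(f'(\xi)/f_X(\xi))^2$, already shows the mechanism concretely, with your general argument supplying the witness for every non-binary $k$.
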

\begin{proof}
First, let $k \in \{0,1\}^p$ be binary and $\pi=\{(M_j)_{1 \leq j \leq \abs{\pi}}\}$ 
be a partition of the lattice corresponding to $k$. 
One can show that 
$r(\epsilon, k) = \prod_{j=1}^{\abs{\pi}}r(\epsilon, \nu_{M_{j}})$. 
With that
\begin{align}
	\frac{1}{r(\epsilon,k)}\kappa^A_k
	& =
   \sum _{\pi \in \Pi(k)}
	(-1)^{(\abs{\pi}-1)} (\abs{\pi}-1)!
	\prod_{\substack{j=1, \\ M_j \in \pi}}^{\abs{\pi}}
	\frac
	{
		D^{\nu_{M_j}}f_X(\xi)
	}
	{
		f_X(\xi)
	}
   +O(\epsilon^2). \label{taking local cumulants to zero}
\end{align}
Now take limits as $\epsilon \rightarrow 0$ to obtain
$
	lim_{\epsilon \rightarrow 0} 
	\frac{1}{r(\epsilon,k)}\kappa^A_k
	= 
	\kappa_k^\xi.
$

Conversely, suppose $k$ is not binary. 
Express
$
	\kappa^A_k
$
as a linear combination of local moments. 
Consider the degenerate partition $\pi$, which
holds only one multiset $M$ with multiplicity $\nu_M=k$.
The quantity associated with $\pi$ converges to
$
	c
	\frac
	{
		D^kf_X(\xi)
	}
	{
		f_X(\xi)
	} \label{wrong convergence}
$
for some constant $c \text{ in } \mathbb{R}$.
$k$ not being binary, this cannot be a differential moment, 
which are proportional to
$D^\alpha f_X(\xi)$ for some binary $\alpha$.
Differential cumulants are linear combinations
of differential moment products only. 
Hence 
$
	\kappa^A_k
$
does not converge to a differential cumulant.
\end{proof}

Of particular interest to us are differential 
cumulants which vanish everywhere. 
We refer to them as zero-cumulants. 
Writing $g=\log f_X$, 
we shall usually write $D^\alpha g=0$
to denote the zero-cumulant associated with $\alpha$
in the understanding that this holds for all $x$. 

The next section shows that
sets of zero cumulants are isomorphic 
to conditional independence statements.
As a consequence of lemma \ref{lemma: Differential cumulants} 
zero-cumulants are invariant 
under diagonal transformations of the random vector $X$. 
In particular, they are not affected 
by the probability integral transformation and hence any 
result below holds also true for the copula density of $X$.
 
\section{Independence and conditional independence} \label{conditional independence statements}
From now on, we shall assume 
that $f_X$ is strictly positive everywhere. 
Sets of zero-cumulants are equivalent to
conditional and unconditional dependency structures. 
\begin{proposition}[Independence in the bivariate case] \label{prop: independence bivariate case}
   Let $X \text{ in } \mathbb{R}^2$.
	Then $X_1 \independent X_2 \iff \kappa_{11}^x=0 \quad  \text{ for all } x \text{ in } \mathbb{R}^2$.
\end{proposition}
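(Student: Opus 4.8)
The plan is to establish the equivalence in both directions by leveraging Lemma \ref{lemma: Differential cumulants}, which tells us that the differential cumulant $\kappa_{11}^x$ equals $D^{(1,1)}\log f_X(x) = \frac{\partial^2}{\partial x_1 \partial x_2}\log f_X(x)$, since the index vector $k=(1,1)$ is binary and thus $\alpha = (1,1)$. The proposition therefore reduces to showing that bivariate independence is equivalent to the vanishing of this mixed partial of the log-density everywhere.

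First I would handle the forward direction. Assuming $X_1 \independent X_2$, the joint density factorizes as $f_X(x_1,x_2) = f_{X_1}(x_1) f_{X_2}(x_2)$ (using strict positivity, which is in force from the start of this section). Taking logarithms gives $g(x_1,x_2) = \log f_{X_1}(x_1) + \log f_{X_2}(x_2)$, a sum of a function of $x_1$ alone and a function of $x_2$ alone. Differentiating once with respect to $x_1$ kills the second summand, and then differentiating with respect to $x_2$ kills what remains, so $\frac{\partial^2}{\partial x_1 \partial x_2}g = 0$ everywhere, i.e. $\kappa_{11}^x = 0$ for all $x$.

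For the converse, I would start from $\frac{\partial^2}{\partial x_1 \partial x_2}g(x_1,x_2) = 0$ for all $(x_1,x_2)$. Integrating this relation shows that $g$ must be additively separable: fixing a reference point $(a_1,a_2)$ and integrating $\partial_{x_1}\partial_{x_2}g = 0$ yields $g(x_1,x_2) = \phi(x_1) + \psi(x_2)$ for some functions $\phi,\psi$ (this is the standard consequence that a $C^2$ function with vanishing mixed partial separates additively on a rectangular domain such as $\mathbb{R}^2$). Exponentiating gives $f_X(x_1,x_2) = e^{\phi(x_1)} e^{\psi(x_2)}$, a product of a function of $x_1$ and a function of $x_2$. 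A density that factors as a product of univariate functions forces the marginals to factorize accordingly (after absorbing the normalizing constant), so $f_X(x_1,x_2) = f_{X_1}(x_1) f_{X_2}(x_2)$, which is exactly $X_1 \independent X_2$.

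The main obstacle, and the only nontrivial analytic point, is the separability argument in the converse: deducing $g = \phi(x_1) + \psi(x_2)$ from the vanishing mixed partial. This requires the domain to be a product region (here all of $\mathbb{R}^2$, which is fine) and requires enough smoothness to justify the integration, both of which are guaranteed by the differentiability hypotheses on $f_X$ together with strict positivity ensuring $g = \log f_X$ is well defined and smooth. Everything else is a routine application of Lemma \ref{lemma: Differential cumulants} and elementary properties of densities.
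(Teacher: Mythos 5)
Your proof is correct and takes essentially the same route as the paper's: the paper's own one-line proof is exactly the equivalence $\kappa_{11}^x = \frac{\partial^2}{\partial x_1\partial x_2}\log f_{X_1,X_2}(x_1,x_2) = 0 \iff f_{X_1,X_2}(x_1,x_2)=e^{h_1(x_1)+h_2(x_2)}$, which you have merely expanded by spelling out the integration-on-a-product-domain separability step and the absorption of normalizing constants into the marginals. No gaps; your version just makes explicit what the paper leaves implicit.
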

\begin{proof}
\begin{align*}
	0 = \kappa_{11}^x  
	= 
	\frac{\partial^2}{\partial x_1\partial x_2}\log(f_{X_1,X_2}(x_1,x_2) 
	\iff
	f_{X_1,X_2}(x_1,x_2) & = e^{h_{1}(x_1)+h_{2}(x_2)} 
\end{align*}
for some functions $h_1,h_2:\mathbb{R}\rightarrow \mathbb{R}$.
\end{proof}
In the multivariate case, we can express conditional 
independence of any pair given the remaining variables
through square free differential cumulants.
\begin{proposition}[Conditional independence of two random variables] \label{prop: conditional independence pair of r.v.}
   Let $X \text{ in } \mathbb{R}^p$.
	Then 
\begin{equation*}
	X_i \independent X_j | X_{-ij} \iff \kappa_{k}^x=0 \quad  \text{ for all } x \text{ in } \mathbb{R}^p,
\end{equation*}
	where $$X_{-ij}:=(X_1,...,X_{i-1},X_{i+1},...,X_{j-1},X_{j+1},...,X_p)$$ and
	$k=e_i+e_j ,\;\;(i, j) \in \{1,...,p\}^2, \;\;i \neq j $.
\end{proposition}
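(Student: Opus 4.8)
The plan is to convert the cumulant condition into a statement about a mixed partial derivative of the log-density and then recognise it as the differential form of the density factorisation that characterises conditional independence.

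By Lemma \ref{lemma: Differential cumulants}, the differential cumulant of order $k$ equals $D^{\alpha}\log f_X$, where $\alpha$ is the binary projection of $k$. For $k = e_i + e_j$ with $i \neq j$, the only nonzero entries sit at positions $i$ and $j$, both equal to one and hence odd, so $\alpha = e_i + e_j$ and
\begin{equation*}
	\kappa_k^x = \frac{\partial^2}{\partial x_i \partial x_j}\log f_X(x).
\end{equation*}
Thus the right-hand side of the claimed equivalence is exactly the condition $\partial^2 g/\partial x_i\partial x_j = 0$ everywhere, writing $g = \log f_X$.

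Next I would invoke the standard factorisation criterion for conditional independence: since $f_X$ is strictly positive, $X_i \independent X_j \mid X_{-ij}$ holds if and only if the joint density splits as $f_X(x) = a(x_i, x_{-ij})\,b(x_j, x_{-ij})$ for some strictly positive functions $a,b$; equivalently, taking logarithms, $g(x) = \tilde a(x_i, x_{-ij}) + \tilde b(x_j, x_{-ij})$. With this in hand the argument mirrors Proposition \ref{prop: independence bivariate case}, but carried out with the remaining coordinates $x_{-ij}$ held fixed as parameters. The forward implication is immediate: differentiating the additive decomposition gives $\partial \tilde a/\partial x_j = 0$ and $\partial \tilde b/\partial x_i = 0$, so the mixed partial vanishes. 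For the converse, $\partial^2 g/\partial x_i\partial x_j = 0$ says that $\partial g/\partial x_j$ does not depend on $x_i$; integrating first in $x_j$ and recovering the constant of integration as a function of $(x_i, x_{-ij})$ reproduces the additive form, whence exponentiation yields the factorisation and hence the conditional independence.

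The main obstacle is not the integration itself but the careful bookkeeping of which variables the ``constants of integration'' are permitted to depend on, together with a clean justification of the factorisation criterion in the conditional (rather than marginal) case. I would state the criterion as $f_X(x) = \phi(x_i, x_{-ij})\psi(x_j, x_{-ij})$ and verify both directions through $f_{X_i, X_j \mid X_{-ij}} = f_{X_i \mid X_{-ij}}\,f_{X_j \mid X_{-ij}}$, taking care that strict positivity of $f_X$ makes all conditional densities well defined and the logarithm legitimate everywhere. Since $\alpha = e_i + e_j$ is square-free, no subtlety about the distinction between differential and limiting local cumulants intervenes here.
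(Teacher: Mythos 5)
Your proof is correct and takes essentially the same route as the paper, whose own proof is just the one-line remark ``by analogy with the bivariate case'': you have simply spelled out that analogy, namely reducing $\kappa_{e_i+e_j}^x$ to $\partial^2 g/\partial x_i\partial x_j$ via Lemma \ref{lemma: Differential cumulants} and then running the additive-decomposition/integration argument of Proposition \ref{prop: independence bivariate case} with $x_{-ij}$ held fixed, combined with the standard factorisation criterion for conditional independence under strict positivity. Your careful handling of the constants of integration as functions of $(x_i,x_{-ij})$ and $(x_j,x_{-ij})$ is exactly the detail the paper leaves implicit.
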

\begin{proof}
By analogy with the bivariate case.
\end{proof}
Setting several square-free differential cumulants to zero 
simultaneously allows us to express conditional independence 
statements. 
\begin{proposition}[Multivariate conditional independence] \label{prop: conditional independence multivariate case}
Given three index sets $I,J,K$ which 
partition $\{1,...,p\}$, let $S =  \{e_i+e_j, i \in I, j \in J\}$. 
Then
\begin{equation*}
   X_I \independent X_J | X_{K} \iff \kappa_{k}^x=0 
   \text{ for all }
   k \in S 
   \text{ and for all }
   x \text{ in } \mathbb{R}^p.
\end{equation*}
\end{proposition}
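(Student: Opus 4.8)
The plan is to reduce the claim to a factorization of the log-density and then exploit smoothness. By Lemma~\ref{lemma: Differential cumulants}, each $k = e_i + e_j$ (with $i \in I$, $j \in J$, hence $i \neq j$ since the blocks are disjoint) has exactly two odd components, so its associated $\alpha$ equals $e_i + e_j$ and $\kappa_{e_i+e_j}^x = \frac{\partial^2}{\partial x_i \partial x_j} g(x)$, where $g := \log f_X$. The right-hand condition of the proposition therefore reads: $\frac{\partial^2 g}{\partial x_i \partial x_j}(x) = 0$ for every $i \in I$, $j \in J$ and all $x \in \mathbb{R}^p$. It remains to show that this block of vanishing cross-partials is equivalent to $X_I \independent X_J \mid X_K$.

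For the forward implication I would invoke the classical factorization criterion for conditional independence: for a strictly positive density, $X_I \independent X_J \mid X_K$ holds iff $f_X(x) = \phi(x_I, x_K)\,\psi(x_J, x_K)$ for some positive $\phi, \psi$. Taking logarithms gives $g(x) = \log\phi(x_I, x_K) + \log\psi(x_J, x_K)$; differentiating once along $i \in I$ annihilates the $J$-summand and once along $j \in J$ annihilates the surviving $I$-summand, so every cross-partial vanishes.

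The converse is the substantive direction. Assume $\frac{\partial^2 g}{\partial x_i \partial x_j} \equiv 0$ for all $i \in I$, $j \in J$. Fixing $i \in I$, Schwarz's theorem gives that $\frac{\partial g}{\partial x_i}$ has zero derivative in each $x_j$ direction ($j \in J$), so on the connected domain $\mathbb{R}^p$ it is independent of $x_J$. I would then fix a reference value $x_J^0$ and set $h(x) := g(x) - g(x_I, x_J^0, x_K)$; differentiating $h$ along any $i \in I$ and using the $x_J$-independence just obtained yields $\frac{\partial h}{\partial x_i} \equiv 0$, so $h$ depends only on $(x_J, x_K)$. This produces the block-additive decomposition $g(x) = g_1(x_I, x_K) + g_2(x_J, x_K)$, i.e.\ $f_X = e^{g_1} e^{g_2}$, and the same factorization criterion returns $X_I \independent X_J \mid X_K$.

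The main obstacle is exactly this decomposition step in the converse: converting the pointwise vanishing of the cross-partials into a genuine additive split of $g$ across the whole $I$- and $J$-blocks simultaneously, rather than coordinate by coordinate. The analytic ingredients — equality of mixed partials and ``vanishing directional derivative on a connected set forces constancy in that direction'' — are available because $f_X$ is assumed $p+1$ times continuously differentiable, strictly positive, and defined on all of $\mathbb{R}^p$. The remaining care is in citing the density factorization criterion for conditional independence, which is classical (Proposition~\ref{prop: independence bivariate case} is the $K = \emptyset$, $\abs{I} = \abs{J} = 1$ special case) but is the bridge between the analytic statement and the probabilistic one.
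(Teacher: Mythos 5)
Your proof is correct, but it takes a genuinely different route from the paper's. After the common first step (using Lemma~\ref{lemma: Differential cumulants} to read the zero-cumulant conditions as vanishing cross-partials of $g=\log f_X$), the paper argues combinatorially: by Proposition~\ref{prop: conditional independence pair of r.v.} each condition $\kappa^x_{e_i+e_j}=0$ is equivalent to the pairwise statement $X_i \independent X_j \mid X_{-ij}$, and the equivalence between the block statement $X_I \independent X_J \mid X_K$ and the family of all such pairwise statements over $(i,j)\in I\times J$ is then imported from the graphoid calculus --- decomposition for one direction and the intersection axiom for the other, the latter valid because $f_X$ is strictly positive \citep[see][]{cozman2005}. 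You instead prove the equivalence analytically and self-containedly: Schwarz's theorem plus connectedness of $\mathbb{R}^p$ upgrade the vanishing cross-partials to the global additive split $g(x)=g_1(x_I,x_K)+g_2(x_J,x_K)$ via the reference-point subtraction $h(x)=g(x)-g(x_I,x_J^0,x_K)$, and the classical factorization criterion for conditional independence converts this into the probabilistic statement. In effect your converse re-derives the intersection property in the smooth strictly positive setting, and it is essentially the hard direction of Theorem~\ref{theorem: hierarchical model and differential cumulants} specialized to the complex with maximal cliques $I\cup K$ and $J\cup K$, obtained without invoking Hammersley--Clifford. What each approach buys: the paper's argument is short and modular but leans on Proposition~\ref{prop: conditional independence pair of r.v.} (itself proved only ``by analogy'') and on imported axioms, whereas yours makes explicit exactly where smoothness, strict positivity and the convexity of the domain enter. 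One small regularity wrinkle in your forward direction: the factors $\phi,\psi$ in $f_X=\phi\,\psi$ (e.g.\ $\phi=f_{X_I,X_K}$ and $\psi=f_{X_J,X_K}/f_{X_K}$) are not a priori differentiable, so ``differentiating annihilates the summand'' needs justification; the clean fix is to observe that $g(x_I,x_J,x_K)-g(x_I,x_J^0,x_K)=\log\psi(x_J,x_K)-\log\psi(x_J^0,x_K)$ is independent of $x_I$, whence $\partial g/\partial x_i$ is independent of $x_J$ and the cross-partials of the smooth function $g$ vanish with no differentiability of the individual factors required.
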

\begin{proof}
From proposition \ref{prop: conditional independence pair of r.v.} it is clear, 
that this is equivalent to the 
conditional independence statement
\begin{equation*}
   X_I \independent X_J | X_{K} 
	\iff 
	X_i \independent X_j | X_{-ij} \quad  \text{ for all } (i,j) \in I\times J.
\end{equation*}
Sufficiency ($\Rightarrow$) and necessity ($\Leftarrow$) 
are semi-graphoid and graphoid axioms 
referred to as decomposition and intersection respectively. 
Both hold true 
for strictly positive conditional densities
\citep[see for instance][]{cozman2005}.
\end{proof}

Pairwise conditional independence of all pairs is equivalent to 
independence. 
\begin{theorem}[Pairwise conditional independence if and only if independence]
	The random variables $X_1,...,X_n$ are independent 
	if and only if $\kappa_{e_i+e_j}=0 \quad
	 \text{ for all } (i, j) \in \{1,...,n\}^2 , \;\;i \neq j$.
\end{theorem}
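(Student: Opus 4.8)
The plan is to translate the cumulant hypothesis into a second-order partial differential condition on $g := \log f_X$ and then to characterise the densities satisfying it. By Lemma~\ref{lemma: Differential cumulants}, for $k = e_i + e_j$ with $i \neq j$ the projection $\alpha$ again equals $e_i + e_j$ (both nonzero entries of $k$ are odd), so the hypothesis $\kappa^x_{e_i+e_j} = 0$ for all $x$ is precisely $\frac{\partial^2}{\partial x_i \partial x_j} g(x) = 0$ on all of $\mathbb{R}^n$. The theorem therefore reduces to the assertion that $f_X$ equals the product of its marginals if and only if every mixed second partial of $g$ vanishes. The forward direction is immediate: independence gives $f_X(x) = \prod_k f_{X_k}(x_k)$, hence $g(x) = \sum_k \log f_{X_k}(x_k)$, a sum of univariate terms, all of whose mixed second partials vanish.

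For the converse I would first fix $i$ and observe that $\frac{\partial^2}{\partial x_i \partial x_j} g = 0$ for every $j \neq i$ says the function $\frac{\partial g}{\partial x_i}$ has vanishing partial derivative in each coordinate except $x_i$. Since $\mathbb{R}^n$ is connected, this forces $\frac{\partial g}{\partial x_i}$ to be a function $\phi_i(x_i)$ of $x_i$ alone. Putting $h_i(x_i) := \int_0^{x_i}\phi_i(t)\,dt$, the function $g - \sum_k h_k$ then has identically zero gradient and so is constant; absorbing the constant into one summand yields the additively separable form $g(x) = \sum_{k=1}^n h_k(x_k)$, and thus $f_X(x) = \prod_{k=1}^n e^{h_k(x_k)}$.

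It then remains to pass from this factorisation to genuine mutual independence, which is the only step requiring integrability rather than differentiation. Set $c_k := \int_{\mathbb{R}} e^{h_k(t)}\,dt$; these are finite and positive because $\int_{\mathbb{R}^n} f_X = 1$ factors by Fubini as $\prod_k c_k = 1$. Integrating out all but the $k$-th coordinate gives the marginal $f_{X_k}(x_k) = e^{h_k(x_k)}\prod_{l\neq k} c_l = c_k^{-1} e^{h_k(x_k)}$, using $\prod_l c_l = 1$. Multiplying these marginals recovers $\prod_k f_{X_k}(x_k) = \big(\prod_k c_k\big)^{-1}\prod_k e^{h_k(x_k)} = \prod_k e^{h_k(x_k)} = f_X(x)$, which is exactly mutual independence.

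I expect the main obstacle to be the separation step in the second paragraph rather than any single computation: making rigorous that ``$\frac{\partial g}{\partial x_i}$ has vanishing partial in every other coordinate'' implies dependence on $x_i$ only, and that a field with zero gradient on $\mathbb{R}^n$ is constant, both rely on the connectedness of the domain and on $g$ being $C^2$ (guaranteed here since $f_X$ is $p+1$ times continuously differentiable and strictly positive). An alternative would be to induct on $n$ using Proposition~\ref{prop: independence bivariate case}, peeling off one variable at a time, but the direct separation argument is cleaner and avoids repeatedly invoking the graphoid axioms.
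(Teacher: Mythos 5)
Your proof is correct, but it takes a genuinely different route from the paper's. The paper proves necessity by induction on the number of variables: it takes Proposition~\ref{prop: independence bivariate case} as the base case, integrates the single condition $\kappa_{e_1+e_2}=0$ to obtain $f = e^{h_1(x_{-1})+h_2(x_{-2})}$, marginalises out $x_1$, applies the induction hypothesis to the resulting $n$-dimensional marginal to split $h_1(x_{-1})$ into $g_1(x_{-12})+g_2(x_2)$, and thereby peels off one variable at a time. You instead use all $\binom{n}{2}$ conditions simultaneously in a one-shot separation-of-variables argument: the vanishing mixed partials force each $\partial g/\partial x_i$ to be a function of $x_i$ alone (constancy on connected slices), whence $g - \sum_k h_k$ has identically zero gradient and $g$ is additively separable. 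Your final Fubini step --- setting $c_k = \int e^{h_k}$, noting that a product of values in $(0,\infty]$ equal to $1$ forces every factor finite and positive, and identifying $e^{h_k} = c_k f_{X_k}$ --- makes explicit the passage from a factorised density to genuine mutual independence, a step the paper's proof leaves largely implicit. What each approach buys: yours is more elementary and self-contained, avoiding induction and the repeated marginalisation argument, and it is arguably more rigorous about integrability; the paper's inductive route reuses the bivariate proposition and mirrors its broader theme of assembling global independence from pairwise conditional statements. Note finally that your opening reduction via Lemma~\ref{lemma: Differential cumulants} (that $\alpha = k$ when $k = e_i+e_j$ is square-free) is exactly how the paper interprets these zero-cumulants, so the two proofs start from the same system $\frac{\partial^2}{\partial x_i \partial x_j} g = 0$ and diverge only in how they integrate it.
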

\begin{proof}
Sufficiency follows from differentiation of the log-density.
Necessity can be proved by induction on 
the number of variables $n$. 
The statement is true for $n=2$ by proposition \ref{prop: independence bivariate case}.
Let the statement be true for $n$ and let the 
$n+1 \choose 2$ differential cumulants $\kappa_{e_i+e_j}$ vanish, 
where $e_i$ and $e_j$ are unit vectors in $\mathbb{R}^{n+1}$.
Consider $\kappa_{e_1+e_2}=0$. 
Integration with respect to $x_1$ and $x_2$ yields 
\begin{equation} \label{eqn: factor density}
   f_{X_1,...,X_{n+1}}(x_1,...,x_{n+1})
	=
	e^
	{
			h_{1}(x_{-1})
			+
			h_{2}(x_{-2})
	}
\end{equation}
for some functions $h_1:\mathbb{R}^n \longrightarrow \mathbb{R}$ and $h_2:\mathbb{R}^n \longrightarrow \mathbb{R}$.
Now integrate again with respect to $x_1$ to obtain
\begin{equation*}
   f_{X_{-1}}(x_{-1})=
	e^{h_{1}(x_{-1})}
	\int_{\mathbb{R}} e^{h_{2}(x_{-2})} dx_1.
\end{equation*}
The left hand side is an n-dimensional marginal density which 
factorises into $n$ marginals by induction assumption:
$
f_{X_{-1}}(x_{-1})=\prod_{i=2}^{n+1}f_{X_i}(x_i).
$
This allows us to conclude that $h_{1}(x_{-1})$ 
can be split into a sum of two functions, $g_1:\mathbb{R}^{n-1} \longrightarrow \mathbb{R}$ 
and $g_2:\mathbb{R} \longrightarrow \mathbb{R}$, 
where the latter is a function of $x_2$ only, i.e.
$h_{1}(x_{-1}) = g_{1}(x_{-12})+g_{2}(x_{2}).$
Considering \eqref{eqn: factor density} again we see that the density $f_{{X_1},...,{X_{n+1}}}$ factorises 
\begin{equation*} 
   f_{X_1,...,X_{n+1}}(x_1,...,x_{n+1})
	=
	e^
	{ 
		g_{2}(x_{2})
		+
	   g_{1}(x_{-12})
		+
	   h_{2}(x_{-2})
	}
	.
\end{equation*}
Hence $X_2 \independent X_{-2}$ and the density of $X_{-2}$
factorises by induction assumption. 
\end{proof}

\section{Hierarchical models} \label{hierarchical models}

The analysis of the last section makes clear that setting certain mixed
two-way partial derivatives  of $g(x) = \log f_X(x)$ equal to zero,
is equivalent to independence or conditional independence statements.
We can go further and define a generalized hierarchical model using
the same process.

The basic structure of a hierarchical  model can be define via a
simplicial complex. Thus let $\mathcal{N} = \{1, \ldots, p\}$ 
be the vertex set representing the random variables $X_1,...,X_p$. 
A collection
$\mathcal S$ of index sets $J \subseteq \mathcal N$ is a
simplicial complex if it is closed under taking subsets, 
i.e. if $J \text{ in } \mathcal S$ and $K \subseteq J$ then $K
\text{ in } \mathcal S$.

\begin{definition} Given a simplicial complex $\mathcal S$ over an index
	set $\mathcal{N} = \{1, \ldots, p\}$ and an absolutely continuous random 
	vector $X$ a hierarchical model for the joint
	distribution function $f_X(x)$ takes the form:
	\begin{equation*}
		f_X(x) = \exp \left\{ \sum_{J \text{ in } \mathcal S} h_J(x_J)\right\},
	\end{equation*}
	where $h_J:\mathbb{R}^J\longrightarrow \mathbb{R}$ and
	$x_J\text{ in } \mathbb{R}^J$ is 
	the canonical projection of $x \text{ in } \mathbb{R}^p$ onto 
	the subspace associated with the index set $J$.
\end{definition}

This is equivalent to a quasi-additive model
for $g(x) =  \sum_{J \in \mathcal S} h_J(x_J), $
and we also refer to this model for $g(x)$ as being hierarchical. 
It is clear that we may write the model over the maximal cliques only, 
namely simplexes which are not contained in a larger simplex.
In the terminology of \citet{lauritzen1996} we require
$f_X$ be positive and factorise according to $\mathcal{S}$ 
for it to be a hierarchical model with respect to $\mathcal{S}$.

Associated to an index set $K\subseteq \mathcal{N}$ is 
a differential operator $D^k$, where 
$k=\sum_{i\in K} e_i \in \{0,1\}^p$ 
holds ones for every member of $K$ and zeros otherwise. 
In the following, we overload the differential operator 
by allowing it to be superscripted by a set or by a vector.  
Thus, for an index set $K$ we set $D^K:=D^k$ and similarly $\kappa_K^x:=\kappa_k^x$.
$D^K$ returns the differential cumulant $\kappa_K^x$, 
when applied to $g(x)$.
\begin{example}
	Let $K = \{2,4,6\}$. We obtain $k = (0,1,0,1,0,1)$
	and 
	\begin{equation*}
	   D^Kg(x) =\kappa_K^x = \kappa_k^x 
		= 
		\frac{\partial^3}{\partial x_2 \partial x_4 \partial x_6} g(x).
	\end{equation*}
\end{example}
We collect the results of the last section into a comprehensive
statement. First, we define the complementary complex to a
simplicial complex $\mathcal S$ on $\mathcal{N}$.
\begin{definition} Given a simplicial complex $\mathcal S$ on an
	index set $\mathcal{N}$ we define the complementary complex as the collection
	$\bar{\mathcal S}$ of every index set $K$ which is not a member of
	$\mathcal S$.
\end{definition}

Note immediately that $\bar{\mathcal S}$ is closed under unions, 
i.e. $K,K' \text{ in } \bar{\mathcal S} \Rightarrow K
\cup K' \in \bar{\mathcal S}$.
It is a main point of this paper that there is a duality between
setting collections of mixed differential cumulants 
equal to zero and a general
hierarchical model:
\begin{theorem}\label{theorem: hierarchical model and differential cumulants}
	Given a simplicial complex $\mathcal S$ on an index set
	$\mathcal{N}$, a model $g$ is hierarchical, based on $\mathcal
	S$ if and only if all differential cumulants on the 
	complementary complex vanish everywhere, that is
	\begin{equation*}
		 \kappa_K^x =0,\; \mbox{ for all}\; x \text{ in } \mathbb{R}^p \; \mbox{and for all}\;
		 K \text{ in } \bar{\mathcal S}.
	\end{equation*}
\end{theorem}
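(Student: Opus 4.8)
The plan is to recast the statement entirely in terms of the smooth function $g = \log f_X$. By Lemma \ref{lemma: Differential cumulants} and the set/vector overloading of the differential operator, $\kappa_K^x = D^K g(x)$, so the theorem asserts that $g(x) = \sum_{J \in \mathcal S} h_J(x_J)$ for suitable $h_J$ if and only if $D^K g \equiv 0$ for every $K \in \bar{\mathcal S}$. Since we have assumed $f_X$ strictly positive and $p+1$ times continuously differentiable, $g$ is $C^{p+1}$; hence all mixed partials commute and the fundamental theorem of calculus may be applied freely throughout.

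The forward implication I would dispatch directly. Differentiating $g = \sum_{J \in \mathcal S} h_J(x_J)$ term by term, I observe that $D^K h_J(x_J) = 0$ whenever $K \not\subseteq J$, since then $K$ contains a variable on which $h_J$ does not depend. Thus $D^K g = \sum_{J \in \mathcal S,\, K \subseteq J} D^K h_J$. For $K \in \bar{\mathcal S}$ there is no $J \in \mathcal S$ with $K \subseteq J$, for such a $J$ would force $K \in \mathcal S$ by subset-closure of the simplicial complex, contradicting $K \notin \mathcal S$. The sum is therefore empty and $D^K g \equiv 0$.

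For the converse I would use an inclusion--exclusion (M\"obius) decomposition of $g$ about a fixed reference point $a \in \mathbb R^p$. Writing $(x_L, a_{-L})$ for the point agreeing with $x$ on coordinates in $L$ and with $a$ elsewhere, I set
\[
  g_J(x_J) := \sum_{L \subseteq J} (-1)^{\abs{J}-\abs{L}} g(x_L, a_{-L}), \qquad J \subseteq \mathcal N,
\]
so that M\"obius inversion on the subset lattice gives $g(x) = \sum_{J \subseteq \mathcal N} g_J(x_J)$, each $g_J$ depending only on $x_J$. Two facts drive the argument. First, $g_J$ vanishes whenever some $x_i$ with $i \in J$ equals $a_i$, since then the terms with $i \in L$ cancel against those with $i \notin L$. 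Second, applying $D^J$ annihilates every summand except $L = J$, yielding the identity $D^J g_J(x_J) = (D^J g)(x_J, a_{-J})$. Now for $J \in \bar{\mathcal S}$ the hypothesis gives $D^J g \equiv 0$, hence $D^J g_J \equiv 0$; combined with the boundary vanishing, a repeated application of the fundamental theorem of calculus (expressing $g_J$ as the iterated integral of $D^J g_J$ from $a$ to $x$ along the coordinates of $J$) forces $g_J \equiv 0$. Therefore $g(x) = \sum_{J \in \mathcal S} g_J(x_J)$, the desired hierarchical form with $h_J = g_J$.

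I expect the substantive difficulty to lie entirely in this converse, and within it in the clean bookkeeping of the inclusion--exclusion decomposition: verifying the boundary-vanishing of each $g_J$, establishing the identity $D^J g_J(x_J) = (D^J g)(x_J, a_{-J})$, and then justifying the integral reconstruction that collapses $g_J$ to zero. The smoothness and strict positivity assumptions on $f_X$ are precisely what legitimise these integral manipulations, so I would state them explicitly where they are invoked.
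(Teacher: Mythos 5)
Your proof is correct, and its converse takes a genuinely different route from the paper's. The forward direction is the same in both: $D^K$ kills every $h_J$ with $K \not\subseteq J$, and subset-closure of $\mathcal S$ guarantees $K \not\subseteq J$ for all $J \in \mathcal S$ when $K \in \bar{\mathcal S}$. For the converse, the paper argues probabilistically: the degree-two conditions $\kappa_{e_i+e_j}^x = 0$ for $\{i,j\} \in \bar{\mathcal S}$ make $f_X$ pairwise Markov, and strict positivity plus the Hammersley--Clifford theorem then yields a clique factorization. You instead reconstruct the hierarchical form analytically via the M\"obius decomposition $g_J(x_J) = \sum_{L \subseteq J} (-1)^{\abs{J}-\abs{L}} g(x_L, a_{-L})$; both of your key facts check out (pairing $L$ with $L \cup \{i\}$ gives the boundary vanishing when $x_i = a_i$, and only $L = J$ survives $D^J$, giving $D^J g_J(x_J) = (D^J g)(x_J, a_{-J})$), and the iterated fundamental theorem of calculus does collapse $g_J$ to zero, since the iterated integral of $D^J g_J$ from $a_J$ to $x_J$ equals $\sum_{L \subseteq J} (-1)^{\abs{J}-\abs{L}} g_J(x_L, a_{J \setminus L}) = g_J(x_J)$ by the boundary vanishing. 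Comparing the two: the paper's argument is two lines on top of standard Markov-field theory, but as written it uses only the $\abs{K}=2$ hypotheses, and Hammersley--Clifford factorizes $f_X$ over the cliques of the \emph{graph} (the 1-skeleton of $\mathcal S$); when $\mathcal S$ is not the clique complex of its graph --- e.g.\ the 3-cycle $\mathcal S = \{12,13,23\}$, which the paper itself discusses and whose graph is complete --- that argument alone does not exclude interaction terms of order three and higher, so the higher-order conditions $\kappa_K^x = 0$, $\abs{K} \geq 3$, still need to be brought in to finish. Your decomposition uses all the hypotheses on $\bar{\mathcal S}$ uniformly and proves the general simplicial-complex statement outright, at the modest cost of the inclusion--exclusion bookkeeping; it also requires nothing beyond smoothness of $g$ (positivity of $f_X$ enters only in defining $g = \log f_X$), and it produces the $h_J$ explicitly as $h_J = g_J$, noting that the constant $g_\emptyset = g(a)$ is retained since $\emptyset \in \mathcal S$.
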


\begin{proof} 
	First, let $g$ be hierarchical with respect to $\mathcal S$, 
	that is $g$ is a log-density with representation 
	$g(x)= \sum_{J \text{ in } \mathcal S} h_J(x_J)$.
	Then, for $K \text{ in } \bar{\mathcal S}$, the associated 
	differential operator $D^K$ 
	annihilates any term $h_J$ in $g$, 
	since $K \not \subseteq J \text{ for any } J \text{ in } S$.

	Conversely, suppose
	$
		 \kappa_K^x =0 \mbox{ for all}\; x \text{ in } \mathbb{R}^p \; \mbox{and for all}\;
		 K \text{ in } \bar{\mathcal S}.
   $	
	Then, by proposition \ref{prop: conditional independence pair of r.v.},
   $f_X$ is pairwise Markov with respect to $\mathcal{S}$ and hence 
	factorises over maximal cliques of $\mathcal{S}$ by the
	Hammersley-Clifford theorem. 
	The reader is referred to \citet{lauritzen1996} 
	for a detailed discussion of factorization and Markov properties. 
\end{proof}

\section{The duality with monomial ideals} \label{monomial ideals}
The growing area of algebraic statistics makes use of 
computational commutative algebra particularly 
for discrete probability model, notably 
Poisson and multinomial log-linear models. 
Work connecting the algebraic methods to continuous probability
models is sparser although considerable process 
has been made in the Gaussian case. For an overview
see \cite{drton2009}. 
Our link to the algebra is via monomial ideals.

A monomial in $x,...,x_p$ is a product of the form $x^\alpha = \prod_{j=1}^px_j^{\alpha_j}$, 
where $\alpha \text{ in } \mathbb{N}^p$. 
A monomial ideal $I$ is a subset of a polynomial ring $k[x_1,...,x_p]$
such that any $m \in I$ can be written as 
a finite polynomial combination 
$m = \sum_{k \in K} h_k x^{\alpha_k}$, where $h_k \in k[x_1,...,x_p]$
and $\alpha_k \in \mathbb{N}^p$ for all $k \in K$.  
We write $I = <x^{\alpha_1},...,x^{\alpha_K}>$ to express that 
$I$ is generated by the family of monomials $(x^{\alpha_k})_{k \in K}$. 

The full set $M$ of monomials contained in monomial ideal $I$ has
the hierarchical structure:
\begin{equation}
	x^{\alpha} \in M \Rightarrow x^{\alpha + \gamma} \in M, \label{eqn: hierarchical structure}
\end{equation} 
for any index set $\gamma \in \mathbb{N}^p$.
A monomial ideal is square-free if its generators $(x^{\alpha_{k}}
)_{1 \leq k \leq K}$ are square free, i.e. $\alpha_k \in \{0,1\}^p \mbox{ for all }
1 \leq k \leq K$. 

The following discussion shows that there is complete duality
between the structure of square-free monomial ideals and
hierarchical models. Associated with a simplicial complex
$\mathcal S$ is its {\em Stanley-Reisner ideal} $I_{\mathcal S}$. 
This is the ideal generated by all square-free
monomial in the complementary complex $\bar{\mathcal
S}$. For a face $K \in \bar{\mathcal S}$ let 
$m_K(x):=\prod_{k \in K} x_k$ denote the associated square-free 
monomial.
Then 
\begin{equation*}
	I_{\mathcal{S}} = <(m_K)_{K \in \bar{\mathcal{S}}}>.
\end{equation*}

The second step, which is a main point of the paper, is to associate
the differential operator $D^K$ with the monomial $m_K(x)$. We need
only confirm that the hierarchical structure implied by 
\eqref{eqn: hierarchical structure}
is consistent with differential
conditions of Theorem \ref{theorem: hierarchical model and differential cumulants}. 

Without loss of generality include all
differential operators which are obtained by continued
differentiation. 
Then, \eqref{eqn: hierarchical structure} is mapped exactly to
\begin{equation*}
	D^{\alpha}g(x) = 0, \; \mbox{for all} \; x \in \mathbb{R}^p 
	\Rightarrow D^{\alpha + \gamma} g(x) = 0,\; \mbox{for all} \; x\in \mathbb{R}^p 
\end{equation*}
simply by continued differentiation. 
This bijective mapping from
monomial ideals into differential operators, 
is sometimes referred to as a ``polarity" and within
differential ideal theory has its origins in ``Seidenberg's
differential nullstellensatz" \citep{seidenberg1956}. 
It allows us to map properties of hierarchical models 
in statistics to monomial ideal properties
and vice versa.

One of the main conditions discussed in the theory of hierarchical
models in statistics is the decomposability of a joint density
function into a product of certain marginal probabilities. Simple
conditional probability is a canonical case. Thus with $p=3$ the
conditional independence $X_1 \independent X_2 | X_3$ is represented by
the graph $1 - 3 - 2$. In this case the graph has the model
simplicial complex: $ \mathcal S =\{13,23\}$, 
where, again, we write $\mathcal{S}$ in terms of its maximal cliques. The
Stanley-Reisner ideal is $I_{\mathcal S} = <x_1x_2>$. 

There is a factorization:
\begin{equation*}
	f_{X_1,X_2,X_3}(x_1,x_2,x_3) 
	=
	\frac{f_{X_1,X_3}(x_1,x_3)f_{X_2,X_3}(x_2,x_3)}{f_{X_3}(x_3)}.
\end{equation*}
Decomposable graphical models, discussed below, are a generalization
of this simple case.
There are other cases, however, where one or more factorizations are
associated with the same simplicial complex. An example is the
$4$-cycle: $\mathcal S = \{12,23,34,41\}$ with
The Stanley-Reisner ideal  
$I_\mathcal{S}=<x_1x_3,x_2x_4>$. 
Although this ideal is rather simple from an
algebraic point of view the 4-cycle from a statistical point of view
is rather complex. 
By considering special ideals we obtain
general classes of models, in a subsection \ref{sec: Artinian closure}.

Another issue is that the structure of $\mathcal S$ may suggest
factorizations even when they are problematical. Perhaps the first
such case is the 3-cycle: $\mathcal S = \{12,13,23\}$. 
The Stanley-Reisner ideal is $ I_\mathcal{S}=<x_1x_2x_3>$. 
The maximal clique log-density
representation has no three-way interaction:
\begin{equation*}
	g(x_1,x_2,x_3) 
	= 
	h_{12}(x_1,x_2) + h_{13}(x_1,x_3) + h_{14}(x_1,x_4).
\end{equation*}
This might suggest the factorization
\begin{equation}
	f_{X_1,X_2,X_3}(x_1,x_2,x_3) 
	=
	\frac{f_{X_1,X_2}(x_1,x_2)f_{X_1,X_3}(x_1,x_3)f_{X_2,X_3}(x_2,x_3)}{f_{X_1}(x_1)f_{X_2}(x_2)f_{X_3}(x_3)}
   \label{eqn: three way, no interaction}
\end{equation}
A factorization of this kind is the continuous analogue 
to a perfect three-dimensional table in the discrete case \citep{darroch1962}.
However, except when $X_1,X_2,X_3$ are independent 
we have not been able to provide a standard density for which 
\eqref{eqn: three way, no interaction} holds. 

\subsection{Decomposability and marginality}
Our use of the index set notation makes its
straightforward to define decomposability.
\begin{definition} 
	Let $\mathcal N = \{1,\ldots,p\}$ 
	be the vertex set of a graph $\mathcal G$ and $I, J$  
	vertex sets such that $I \cup J = \mathcal N$.
	Then $\mathcal G$ is decomposable if and only if 
	$I \cap J$ is complete and 
	$I$ forms a maximal clique
	or the subgraph based on $I$ is decomposable 
	and similarly for $J$.
\end{definition}

Under this condition the corresponding hierarchical model has a
factorization
\begin{equation*}
	f_V (x_V) = \frac{ \prod_{J \in C} f_J(x_J)}{\prod_{K\in S} f_K(X_K)},
\end{equation*}
where the numerator on the right hand side corresponds to cliques
and the denominator to separators which arise in the continued
factorization under the definition.

\begin{figure}
	\begin{center}
		\epsfig{file= 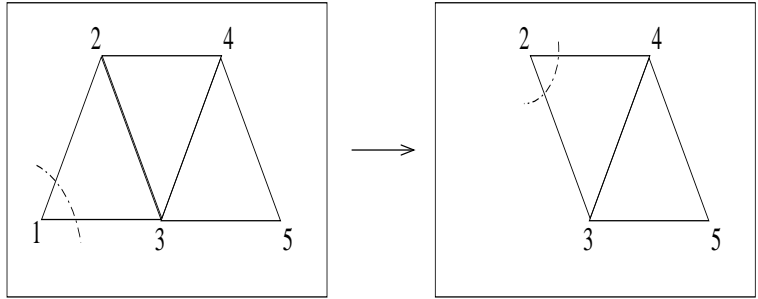,width=10cm,height=4cm}
	\end{center}
	\caption[Decomposable graph]
	{
	   Factorization and marginalisation of a hierarchical model.
	}
	\label{fig: decomposable graph}
\end{figure}
It is important to realize that in order to proceed with the
factorization at each stage a marginalisation step is required. Consider
the simple case based on the simplicial complex $\mathcal{S}=\{123,234,345\}$. 
One choice of factorization at first stage is (with simplified notation):
\begin{equation*}
	f_{12345} = \frac{f_{123}f_{2345}}{f_{23}}
\end{equation*}
and we continue the factorization to give
\begin{equation*}
	f_{12345} 
	= 
	\frac{
				f_{123}f_{234}f_{345}
			}
			{
				f_{23}f_{34}
			}.
\end{equation*}
The process of marginalisation is shown in Figure \ref{fig: decomposable graph}. 
At any stage, we may choose to marginalise with respect to
any variable that is member of just a single clique.
In the first step these are $X_1$ and $X_5$ and suppose we chose to single out $X_1$. 
Once $f_X$ has been integrated with respect to $x_1$,
the marginal model for $X_2,...,X_5$ is obtained.
The removal of a the clique $123$ leads to $X_2$
being exposed and we may continue with $X_2$ or $X_5$ etc.

The Stanley-Reisner ideal 
$I_{\mathcal S} = <x_1x_4, x_1x_5, x_2x_5>$
is an ideal in $k[x_1,x_2,x_3,x_4,x_5]$. 
The factorization of $f_{2345}$ is, however, mapped into the monomial
ideal $<x_2x_5>$ which is an ideal in $k[x_2,x_3,x_4,x_5]$. 
A marginalisation has
allowed us to drop from five dimensions to four. This is clear from
the exponential expression of the model:
\begin{equation*}
	f_{12345}= \exp\left\{h_{123}(x_1,x_2,x_3) + h_{234}(x_2,x_3,x_4) + h_{345}(x_3,x_4,x_5)\right\}.
\end{equation*}
Integrating with respect to $x_1$ we obtain a hierarchical  model
for the marginal joint distribution of $(X_2,X_3,X_4,X_5)$. This
marginalisation is possible because $x_1$ appears only in the single clique
$\{1,2,3\}$.

We have exposed an interesting relationship between the statistical and
algebra formulation: in order to reduce the dimensionality and
obtain the Stanley-Reisner ideal for a reduced set of variables, we
must first perform a marginalisation, which is a non-algebraic operation,
at least, not in general a finite dimensional operation.
We capture this in the following Lemma:
\begin{lemma} \label{lemma: marginalisation}
Whenever a  simplicial complex of hierarchical model has a subset of
vertexes which form a facet of a unique maximal clique (simplex)
then the marginal model obtained by deleting this facet (and its
connections) is valid. Moreover the monomial ideal representation is
obtained by deleting any generators containing the corresponding
variables and is in the ring without these variables.
\end{lemma}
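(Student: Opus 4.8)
The plan is to establish the two claims of the Lemma separately. The statistical claim — that the marginal model obtained by deleting a facet belonging to a unique maximal clique is itself a valid hierarchical model — follows from the exponential representation together with the observation that the special variables can be integrated out cleanly. The algebraic claim — that the corresponding monomial ideal is obtained by deleting generators involving those variables — follows by tracking how the Stanley-Reisner ideal $I_{\mathcal{S}}$ restricts to the smaller polynomial ring. I would carry out the statistical step first, since the algebraic step is largely bookkeeping once the factorization is understood.

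\textbf{Step 1 (the marginalisation is valid).}
Let $F$ be a facet contained in a unique maximal clique $C$, and let $L = C \setminus F$ denote the vertices of $C$ that lie \emph{only} in $C$. Starting from
$
f_X(x) = \exp\left\{\sum_{J \in \mathcal{S}} h_J(x_J)\right\},
$
I would split the sum into those terms $h_J$ whose index set $J$ contains at least one vertex of $L$ (these all satisfy $J \subseteq C$, precisely because $C$ is the unique maximal clique containing $F$) and those that do not. Integrating out the variables $x_L$ factors the partition of terms: the second group passes through the integral unchanged, and the first group contributes a single new term depending only on $x_F$ and on the remaining variables of $C$. The resulting marginal density is again of exponential hierarchical form, now over the simplicial complex $\mathcal{S}'$ obtained from $\mathcal{S}$ by deleting the facet $F$ (equivalently, deleting $C$ and replacing it with its faces that survive). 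This matches the worked example where integrating out $x_1$ from the complex $\{123,234,345\}$ produces a hierarchical model over $\{2,3,4,5\}$.

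\textbf{Step 2 (the ideal drops its generators).}
By Theorem~\ref{theorem: hierarchical model and differential cumulants}, the marginal model over $\mathcal{S}'$ corresponds to the vanishing of all differential cumulants $\kappa_K^x$ for $K \in \bar{\mathcal{S}'}$, and under the polarity of Section~\ref{monomial ideals} these map to the Stanley-Reisner ideal $I_{\mathcal{S}'} = <(m_K)_{K \in \bar{\mathcal{S}'}}>$ inside the ring $k[x_j : j \notin L]$. I would then check that $I_{\mathcal{S}'}$ is exactly $I_{\mathcal{S}}$ with every generator $m_K$ involving a variable $x_j$, $j \in L$, deleted: any square-free monomial in the smaller ring belongs to the complementary complex of $\mathcal{S}'$ if and only if it did so for $\mathcal{S}$, since deleting a facet from a unique maximal clique alters no non-faces supported away from $L$. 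This confirms that passing to the smaller ring amounts to discarding generators containing the marginalised variables.

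\textbf{Main obstacle.}
The delicate point is Step~1, specifically justifying that the integration over $x_L$ acts \emph{cleanly} on the exponential — i.e.\ that the non-$L$ terms really do pull out of the integral and leave behind a genuine function of $x_F$ alone, rather than something depending on the full conditioning set. This is exactly where the hypothesis of a \emph{unique} maximal clique is essential: it guarantees that $x_L$ appears in no $h_J$ outside $C$, so no cross-terms couple $x_L$ to variables beyond $C$. Without uniqueness the integral would mix several cliques and the marginal need not be hierarchical over the deleted complex. I would state this dependence explicitly and verify that strict positivity of $f_X$ (assumed from Section~\ref{conditional independence statements} onward) keeps all the $h_J$ well-defined so that the factorization of the integrand is legitimate.
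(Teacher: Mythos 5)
Your overall strategy is the paper's own: write the density in exponential form over the complex, integrate out the variables that occur in a single maximal clique so that the integral factorizes and leaves behind a function of the surviving part of that clique, and then observe that the Stanley--Reisner ideal of the marginal model is obtained by deleting the generators that meet the removed variables (your Step 2 is correct bookkeeping, modulo Step 1). However, Step 1 contains a genuine gap, and it is located precisely at the point you yourself flag as the crux. You set $L = C \setminus F$ and assert --- in Step 1 and again in the ``main obstacle'' paragraph --- that the uniqueness of the maximal clique $C$ containing the facet $F$ \emph{guarantees} that the vertices of $L$ appear in no term $h_J$ outside $C$. That implication is false: the hypothesis constrains $F$, the part you \emph{keep}, and says nothing about where the vertices of $C \setminus F$ live. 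Concretely, take maximal cliques $\{1,2,3\}$ and $\{1,4\}$, with $F = \{2,3\}$; then $C = \{1,2,3\}$ is the unique maximal clique containing $F$, so your hypothesis holds with $L = \{1\}$, yet $x_1$ also appears in $h_{\{1,4\}}$, the integration over $x_1$ couples $(x_2,x_3)$ with $x_4$, and the marginal of $(X_2,X_3,X_4)$ is in general not hierarchical over the restricted complex $\{23,4\}$. Your parenthetical gloss that $L = C\setminus F$ ``denotes the vertices of $C$ that lie only in $C$'' is thus an extra, unproved, and generally false identification, not a consequence of the stated hypothesis; the argument silently substitutes the condition it needs for the condition it has.

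You have also inverted the roles relative to the lemma and to the paper's proof. In the paper, the deleted facet $J$ is itself the set of vertices each of which belongs to no maximal clique other than $K$ (in the worked example $\mathcal{S}=\{123,234,345\}$ one has $J=\{1\}$, $K=\{1,2,3\}$), so that when the model is written over maximal cliques, $x_J$ appears in the single term $h_K$; one integrates out $x_J$ and keeps the separator $K \setminus J$. You instead keep the facet $F$ and integrate out its complement $L = C \setminus F$. Under your reading the paper's own example fails your hypothesis: $F=\{2,3\}$ lies in both $\{1,2,3\}$ and $\{2,3,4\}$, so it is not contained in a unique maximal clique. The repair is to state the hypothesis on the deleted set directly --- every vertex of $J$ lies in no maximal clique other than $K$ --- after which your splitting of the sum, the clean factorization of the integral, and your Step 2 (restriction of the Stanley--Reisner ideal to the induced subcomplex on the remaining vertices, whose minimal non-faces are exactly the old minimal non-faces avoiding $J$) all go through and coincide with the paper's one-line integration argument.
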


\begin{proof} 
	This follows the lines of the example. If $J$ is
	the subset of vertexes and $K$, with $J \subset K$, is the unique
	maximal clique, then in the exponential expression for the density
	there will be a unique term $\exp(h_K(x_K))$ in which $x_J$ appears.
	Integrating with respect to $x_J$ to obtain the marginal
	distribution for $X_{V \setminus J}$ gives the reduced model. The
	monomial ideal representation follows accordingly.
\end{proof}

\subsection{Artinian closure and polynomial exponential models}\label{sec: Artinian closure}
The terms $h_J(x_J)$ which appear in the hierarchical models have not
been given any special form. In fact it is a main point of this paper
that this is not required to give the monomial ideal equivalence. We
note, again, that we always use square-free monomial ideals.

Certain classes of hierarchical models can, however, be obtained
by imposing further differential conditions. 
The following lemma shows that the log-density is 
polynomial if we impose univariate derivative restriction. 
\begin{lemma}
	If in addition to the differential conditions in Theorem 
	\ref{theorem: hierarchical model and differential cumulants} we impose
	conditions of the form
	\begin{equation}
		\frac{ \partial^{n_i}}{\partial x_i^{n_i}} g(x) = 0,\;
		\text{ for all } 1\leq i \leq p \text{ and } n \in \mathbb{N}^p  \label{eqn: Artinian closure}
   \end{equation}
	the $h$-functions in the corresponding hierarchical
	model are polynomials, in which the degree of $x_i$ does not exceed
	$n_i-1, \; \text{ for all } 1\leq i \leq p$.
\end{lemma}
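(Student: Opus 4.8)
The plan is to extract a polynomial decomposition of $g$ directly from its monomial expansion, rather than from the (non-unique) $h$-functions already supplied by the hierarchical representation. First I would use the additional conditions \eqref{eqn: Artinian closure} to show that $g$ itself is a polynomial with the stated degree bounds. Fixing all variables but $x_i$, the identity $\partial^{n_i}/\partial x_i^{n_i}\, g = 0$ says that $g$ has vanishing $n_i$-th derivative as a function of $x_i$ alone, hence is a polynomial of degree at most $n_i-1$ in $x_i$ with coefficients depending on the remaining variables. Applying the condition for the next index and using the linear independence of the monomials $x_i^j$ to match coefficients propagates this degree bound variable by variable; iterating over all $p$ indices yields
\begin{equation*}
	g(x) = \sum_{\alpha:\, \alpha_i \leq n_i - 1} c_\alpha x^\alpha,
\end{equation*}
a genuine polynomial in which the degree of each $x_i$ is at most $n_i-1$.

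Next I would invoke the differential conditions of Theorem \ref{theorem: hierarchical model and differential cumulants}, namely $\kappa_K^x = D^K g = 0$ for every $K \in \bar{\mathcal S}$, to locate the surviving monomials. For a square-free operator $D^S$ with $S \in \bar{\mathcal S}$ one computes $D^S x^\beta = \left(\prod_{k \in S}\beta_k\right) x^{\beta - \mathbf{1}_S}$ whenever $S \subseteq \operatorname{supp}(\beta)$, and $D^S x^\beta = 0$ otherwise. Because the shift $\beta \mapsto \beta - \mathbf{1}_S$ is injective, distinct monomials of $g$ map to distinct monomials of $D^S g$, so $D^S g = 0$ forces $c_\beta = 0$ for every $\beta$ with $S \subseteq \operatorname{supp}(\beta)$. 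Taking $S = \operatorname{supp}(\beta)$ whenever $\operatorname{supp}(\beta) \notin \mathcal S$ --- which is legitimate since $\mathcal S$ is closed under subsets, so any support outside $\mathcal S$ is itself a member of $\bar{\mathcal S}$ --- shows that $c_\beta = 0$ unless $\operatorname{supp}(\beta) \in \mathcal S$.

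Finally I would collect the monomials by their support. Setting
\begin{equation*}
	h_J(x_J) := \sum_{\beta:\, \operatorname{supp}(\beta) = J} c_\beta x^\beta, \qquad J \in \mathcal S,
\end{equation*}
gives functions depending only on $x_J$, with $g = \sum_{J \in \mathcal S} h_J(x_J)$ and with each $h_J$ polynomial of degree at most $n_i - 1$ in $x_i$, as required. I expect the main obstacle to be the coefficient-matching step in the middle paragraph: one must check that $D^S$ annihilates exactly the monomials whose support fails to contain $S$ and isolates each surviving coefficient cleanly, and then combine this with the subset-closure of $\mathcal S$ to conclude that only faces of $\mathcal S$ can support a monomial of $g$. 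The polynomiality of $g$ and the final grouping are comparatively routine, modulo the mild regularity needed to justify that a vanishing $n_i$-th derivative yields a degree-$(n_i-1)$ polynomial.
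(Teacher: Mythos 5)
Your proof is correct, and its first paragraph is in substance the paper's \emph{entire} proof: the paper argues by repeated integration in each $x_i$ that $g$ is a polynomial of degree less than $n_i$ in $x_i$ with the other variables fixed, and then simply asserts that the result follows. What you add, and what the paper leaves implicit, is the passage from polynomiality of $g$ to polynomiality of the $h$-functions: since the $h_J$ in a hierarchical representation are not unique, one should exhibit a representation with polynomial terms, and your middle step does this cleanly. Because $D^S x^\beta = \bigl(\prod_{k \in S}\beta_k\bigr)x^{\beta - \mathbf{1}_S}$ vanishes precisely when $S \not\subseteq \operatorname{supp}(\beta)$, and the shift $\beta \mapsto \beta - \mathbf{1}_S$ is injective on the surviving monomials, the conditions $D^S g = 0$ for $S \in \bar{\mathcal S}$ annihilate exactly the coefficients $c_\beta$ with $\operatorname{supp}(\beta) \notin \mathcal S$; grouping the survivors by support then yields polynomial $h_J$'s with the stated degree bounds. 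Two minor remarks. First, the legitimacy of taking $S = \operatorname{supp}(\beta)$ does not actually require subset-closure of $\mathcal S$ --- it is immediate from the definition of $\bar{\mathcal S}$ as the complement of $\mathcal S$; subset-closure is instead what guarantees the resulting decomposition is hierarchical, i.e.\ indexed by faces of a simplicial complex. Second, the regularity issue you flag at the end is harmless: in the expansion $g = \sum_{k} a_k(x_{-i})\,x_i^k$ the coefficients $a_k$ are linear combinations of the values of $g$ at $n_i$ fixed nodes in $x_i$ (Lagrange interpolation), so they inherit the smoothness of $g$ and the coefficientwise differentiation in your propagation step is justified. In short: same opening move as the paper, plus a rigorous completion of the step the paper compresses into ``the result follows.''
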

\begin{proof} 
	Repeated integration with respect to $x_i$ shows that $g$ is indeed a
	polynomial in $x_i$ of degree less than $n_i$, when the other
	variable are fixed. Since this holds for all $1\leq i \leq p$
	the result follows.
\end{proof} 
The simultaneous inclusions of derivative operators with respect to 
one indeterminate in \eqref{eqn: Artinian closure}
constitutes an {\em Artinian closure} of the differential 
version of the Stanley-Reisner ideal $I_\mathcal{S}$.
\begin{example}[BEC density]
   Suppose $X$ is bivariate and we impose 
	the symmetric Artinian closure conditions
	\begin{equation*}
		\frac{ \partial^2}{\partial x_i^2} g(x_1, x_2) = 0,\;
		\text{ for } i = 1,2. 
	\end{equation*} Then integration yields
	\begin{align*}
		g(x_1, x_2) = x_1h_1(x_2)+h_2(x_2) \\
		\intertext{and}
		g(x_1, x_2) = x_2h_3(x_1)+h_4(x_1).
	\end{align*}
	A comparison of these functionals identifies 
	$
		h_1(x_2)=a_3x_2+a_1, \; 
		h_2(x_2)=a_0+a_2x_2 , \;
		h_3(x_1)=a_3x_1+a_2 , \;
		h_4(x_1)=a_1x+a_0,
   $
	for some $a_i \in \mathbb{R}$ for all $1\leq i \leq 4$,
	so that $g(x_1,x_2)$ can be written as 
	\begin{equation}
		g(x_1,x_2)=a_0+a_1x_1+a_2x_2+a_3x_1x_2. \label{eqn: BEC densities}
	\end{equation}
	It can be shown that $X_1$ 
   is distributed exponentially conditional on $X_2=x_2$
   for all $x_2>0$ and vice versa.  
   A distributions with that property is called
   bivariate exponential conditionals (BEC) distribution.
   BEC distributions are completely described by $g$ 
   in the sense that any BEC density is of the 
	form \eqref{eqn: BEC densities}
	\citep{arnold1988}. In particular, the independence 
   case is included, if we force $a_3=0$ by imposing
   the additional restriction  
	\begin{equation*}
		\frac{ \partial^2}{\partial x_1x_2} g(x_1, x_2) = 0.
	\end{equation*}
   This confirms Proposition 
   \ref{prop: independence bivariate case} for this particular example.
\end{example}
The previous example extends readily into higher dimension. 
We call a distribution multivariate exponential conditionals (MEC)
distribution, if $X_j$ is distributed exponentially conditional on $X_i=x_i$ for all
$1 \leq i,j \leq p, i\neq j$. We capture 
the extension to the $p$-dimensional case in the following lemma: 
\begin{lemma}[MEC distributions and Artinian closure]
   The following statements are equivalent:
   \begin{enumerate}
   \item
		A distribution belongs to the class of MEC distributions 
   \item
		$g$ is multi-linear, i.e there exist $2^p$ indices $a_s \in \mathbb{R}$
		such that
		$g=\sum_{s \in \zeta} a_s x^s$, where $\zeta=\{0,1\}^p$ denotes the set of 
      $p$ dimensional binary vectors
   \item
			$\frac{ \partial^2}{\partial x_i^2} g(x) = 0,\;
			\text{ for all } 1\leq i \leq p. 
		   $
   \end{enumerate}
\end{lemma}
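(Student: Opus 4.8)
The plan is to prove the three-way equivalence by establishing the cycle $(2)\Rightarrow(3)\Rightarrow(1)\Rightarrow(2)$, reducing each implication to machinery already available in the excerpt. The equivalence $(2)\Leftrightarrow(3)$ is essentially a calculus statement about which log-densities are annihilated by all second-order pure derivatives, while $(1)\Leftrightarrow(3)$ connects the conditional distributions to the curvature of $g$ in each coordinate; the bivariate BEC example already contains the whole argument in miniature, so most of the work is organising the induction/bookkeeping across $p$ variables.

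First I would prove $(2)\Rightarrow(3)$, which is immediate: if $g=\sum_{s\in\zeta}a_sx^s$ with $\zeta=\{0,1\}^p$, then $g$ is affine (degree at most one) in each variable $x_i$ separately, so $\frac{\partial^2}{\partial x_i^2}g(x)=0$ for every $i$. Next I would prove $(3)\Rightarrow(2)$ by the same repeated-integration device used in the preceding lemma on Artinian closure: the condition $\frac{\partial^2}{\partial x_i^2}g=0$ forces $g$ to be affine in $x_i$ with coefficients depending on the remaining variables, i.e. $g(x)=x_i\,u(x_{-i})+v(x_{-i})$ where, crucially, $\frac{\partial^2}{\partial x_j^2}u=\frac{\partial^2}{\partial x_j^2}v=0$ for $j\neq i$ because these operators commute with multiplication by $x_i$ and must annihilate $g$. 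Iterating over all coordinates collapses $g$ to a multilinear polynomial, and matching coefficients across the different integration orders (exactly as in the BEC example, where $h_1,\dots,h_4$ were reconciled) pins down the $2^p$ constants $a_s$.

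The link to the distributional statement $(1)$ is where the real content lies. For $(3)\Rightarrow(1)$ I would fix $i\neq j$ and regard all variables other than $X_j$ as conditioned; then $\frac{\partial^2}{\partial x_j^2}g=0$ says $\log f_{X_j\mid X_{-j}}(x_j\mid x_{-j})$ is affine in $x_j$, so the conditional density of $X_j$ given the rest is of the form $\exp\{b(x_{-j})x_j+c(x_{-j})\}$, which (on the stated support, normalising so that it integrates to one) is an exponential law in $x_j$; specialising the conditioning to $X_i=x_i$ gives precisely the MEC property. The converse $(1)\Rightarrow(2)$ runs backwards: exponentiality of every one-dimensional conditional forces each $\log$-conditional to be affine in its free variable, hence every $\frac{\partial^2}{\partial x_j^2}g=0$, returning us to $(3)$ and thence to $(2)$.

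The main obstacle I expect is the normalisation and support issue hidden inside $(3)\Rightarrow(1)$. The derivative condition only says $\log f_{X_j\mid X_{-j}}$ is affine in $x_j$; to conclude a genuine exponential distribution one needs the linear coefficient to have the correct sign so the density is integrable on the relevant half-line, and one must be careful about whether the conditional is supported on $\mathbb{R}$ or $\mathbb{R}_+$. I would handle this exactly as the BEC example does—restricting attention to the positive orthant where the exponential conditionals are defined—and appeal to the characterisation of MEC densities (the multivariate analogue of Arnold--Strauss, cited as \citet{arnold1988} for the bivariate case) to confirm that the multilinear form in $(2)$ is the unique joint density compatible with all these exponential conditionals. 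The remaining steps are the routine coefficient-matching already illustrated for $p=2$.
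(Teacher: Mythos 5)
Your proof is correct, but it is organised differently from the paper's. The paper dispatches the lemma in two strokes: $(1)\iff(2)$ is outsourced entirely to the characterisation in \cite{arnold1988}, and $(2)\iff(3)$ ``follows the lines of the example'', i.e.\ exactly the repeated-integration and coefficient-matching argument of the BEC case --- this coincides with your $(2)\iff(3)$ step, where your induction $g = x_i\,u(x_{-i}) + v(x_{-i})$ with the second-derivative conditions passing to $u$ and $v$ (by evaluating at two values of $x_i$) is a clean way of making the paper's sketch precise. Where you genuinely diverge is the distributional link: instead of citing the literature for $(1)\iff(2)$, you prove $(1)\iff(3)$ directly --- $\partial^2 g/\partial x_j^2 = 0$ makes $\log f_{X_j\mid X_{-j}}$ affine in $x_j$, hence the full conditional exponential, and conversely. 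This buys a self-contained elementary proof, and it forces you to confront the support and sign constraints (integrability of $e^{b(x_{-j})x_j}$ requires a half-line support and $b(x_{-j})<0$) that the paper suppresses entirely; indeed condition $(3)$ is incompatible with the paper's standing assumption that $f_X$ is strictly positive on all of $\mathbb{R}^p$, so your restriction to the positive orthant is a genuine repair rather than pedantry. Two small remarks: your closing appeal to the Arnold--Strauss characterisation for uniqueness is redundant, since your own $(3)\Rightarrow(2)$ already delivers the multilinear form; and you have (correctly) read the MEC property as exponentiality of the full conditionals $X_j\mid X_{-j}$, whereas the paper's wording (``conditional on $X_i=x_i$'') could be misread as pairwise conditioning, under which $(1)\Rightarrow(3)$ would not follow --- your reading is the one under which the lemma is true and is the one used in the cited literature.
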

\begin{proof} 
	For a proof of $(1) \iff (2)$ see \cite{arnold1988}. 
	The proof of $(2) \iff (3)$ follows the lines of the example.
\end{proof} 

Another case of considerable importance is the Gaussian distribution. Here
$$g(x) = \sum_{K \in \mathcal S} h_K(X_K),$$
and the maximal cliques are of degree two. The latter condition is
partly obtained with an Artinian closure with $n_i = 3,\;i =
1,\ldots, p$. However, more is required. We can guess, from the fact
that for a normal distribution all (ordinary) cumulants of degree
three and above are zero, that if we impose all degree-three differential
cumulant to be zero we obtain polynomial terms of maximum degree 2. 
This is, in fact the correct set of conditions to
make the models terms of degree at most two. In the
$\alpha$-notation the conditions are
\begin{equation*}
	D^{\alpha}g =0,\; \mbox{for all}\; \alpha \in \mathbb{N}^p \;\mbox{with}\; \norm{\alpha}_1=3,
\end{equation*}
which includes the Artinian closure conditions. The
corresponding ideal is generated by all polynomials of degree three.
For a non-singular multivariate Gaussian, we,  of course, require
non-negative definiteness of the degree-two part of the model,
considered as a quadratic form.

The hierarchical model is given by additional restrictions which are
equivalent to removing certain terms of the form $x_ix_j, \; i \neq
j$. This is the same as setting the corresponding $\{ij\}$-th entry
in the inverse covariance matrix (influence matrix) equal to zero.
The removed $x_ix_j$ generate the Stanley-Reisner ideal so that the
zero structure of the influence matrix completely determines the
ideal.

\subsection{Ideal-generated models}
The duality between monomial ideals and hierarchical models
encourages the investigation of the properties of hierarchical
models for different types of ideals. There are some important
properties and features of monomial ideals which may be linked to
the corresponding hierarchical models and 
we mention just a few here in an
attempt to introduce a larger research programme.

We begin with the sub-class of decomposable models.
It is well know from the
statistical literature \citep[see][]{lauritzen1996} that the decomposability
property of the model based on a simplicial complex $\mathcal S$ is
equivalent to the chordal property: there is no chord-less 4-cycle.
Remarkably, the latter is equivalent to a property of
the Stanley-Reisner ideal $I_{\mathcal S}$, namely: that the minimal
free resolution of $I_{\mathcal S}$ be linear(see below for a brief explanation). 
This is a result of  \cite{froeberg1988}, see also \cite{dochtermann2009}.
\cite{petrovic2010} adapt a result of \cite{geiger2006}
to show that $I_{\mathcal S}$, in this case, is generated
in degree 2, that is all its generators have degree 2. 

\begin{theorem} A decomposable graphical model $I_{\mathcal S}$ has a ``2-linear" resolution.  \label{theorem: 2 linear resolution}
\end{theorem}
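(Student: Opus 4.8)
The plan is to assemble the two ingredients that the surrounding discussion already isolates — generation in degree two and linearity of the minimal free resolution — and to observe that their conjunction is exactly the assertion of a $2$-linear resolution. Recall that, by the result of \citet{lauritzen1996} quoted above, a graphical model is \emph{decomposable} precisely when its underlying graph $\mathcal{G}$ is chordal, i.e.\ has no chordless cycle of length at least four. Recall also that an ideal is said to possess a \emph{$2$-linear} resolution when it is generated in degree two and every map in its minimal free resolution is given by a matrix of linear forms; equivalently, the nonzero graded Betti numbers $\beta_{i,j}(I_{\mathcal{S}})$ all lie on the line $j=i+1$. I would prove the two halves separately and then combine them.

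First I would pin down degree-two generation. The simplicial complex $\mathcal{S}$ of a graphical model is the clique (flag) complex of $\mathcal{G}$: a vertex set $K$ is a face exactly when every pair inside $K$ is an edge of $\mathcal{G}$. Hence a set fails to be a face precisely when it contains a non-edge, so the \emph{minimal} non-faces of $\mathcal{S}$ are exactly the two-element non-edges $\{i,j\}\notin E(\mathcal{G})$. By the definition of the Stanley-Reisner ideal, $I_{\mathcal{S}}$ is therefore generated by the squarefree quadratic monomials $m_{\{i,j\}}=x_ix_j$ ranging over the non-edges of $\mathcal{G}$; this is the degree-two generation obtained by \citet{petrovic2010} adapting \citet{geiger2006}. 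In particular $I_{\mathcal{S}}$ coincides with the edge ideal $I(\bar{\mathcal{G}})$ of the complementary graph $\bar{\mathcal{G}}$, whose edges are exactly the non-edges of $\mathcal{G}$.

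Next I would invoke Fröberg's theorem \citep{froeberg1988} in its edge-ideal form: for a finite simple graph $H$, the edge ideal $I(H)$ admits a linear minimal free resolution if and only if the complementary graph $\bar{H}$ is chordal. Applying this with $H=\bar{\mathcal{G}}$ and using $\overline{\bar{\mathcal{G}}}=\mathcal{G}$, linearity of the resolution of $I_{\mathcal{S}}=I(\bar{\mathcal{G}})$ is equivalent to chordality of $\mathcal{G}$, which is precisely the decomposability hypothesis. Combining this with the previous paragraph, $I_{\mathcal{S}}$ is generated in degree two and has a linear resolution — that is, a $2$-linear resolution, as claimed. (As a sanity check, the non-decomposable $4$-cycle of the earlier example has $I_{\mathcal{S}}=<x_1x_3,x_2x_4>$, whose Koszul syzygy is quadratic rather than linear, so the resolution is correctly predicted to be non-linear there.)

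The genuine mathematical content sits entirely inside Fröberg's equivalence, and its nontrivial direction — that chordality of $\mathcal{G}$ forces all higher syzygies of $I(\bar{\mathcal{G}})$ into the linear strand — is the main obstacle. One would ultimately establish it via Hochster's formula together with Eagon--Reiner duality, checking that the reduced homology of the relevant induced subcomplexes of the Alexander dual vanishes off the linear strand; since this is available in the literature I would simply cite it rather than reprove it. The remaining steps, namely reading off the flag-complex structure to force degree-two generation and matching $\mathcal{G}$ against its double complement, are routine.
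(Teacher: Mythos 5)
Your proof is correct and takes essentially the same route as the paper, which likewise obtains the theorem by assembling the equivalence of decomposability with chordality \citep{lauritzen1996}, Fr\"oberg's characterization of linear resolutions \citep{froeberg1988}, and the degree-two generation result of \citet{petrovic2010} adapting \citet{geiger2006}. The only difference is that you spell out the flag-complex/edge-ideal mechanics (minimal non-faces of the clique complex are the non-edges, so $I_{\mathcal S}$ is the edge ideal of $\bar{\mathcal G}$ and Fr\"oberg applies directly), which the paper leaves implicit in its citations.
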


The term linear refers to the structure of the minimal free
resolution of $I_{\mathcal S}$. In this resolution there are
monomial maps between the stages of the resolution sequence. Linear
means that these maps are linear. As a simple example 
consider again the simplicial 
complex $\mathcal{S}=\{123,234,345\}$ with 
Stanley-Reisner ideal $I_\mathcal{S}=<x_1x_4, x_1x_5, x_2x_5>$
The minimal free resolution of $I_\mathcal{S}$ is given by:
\begin{equation*}
[x_1x_4, x_1x_5, x_2x_5] \begin{array}{c}
                    \tiny{ \left[\begin{array}{rr}
                       -x_5 &  0\\
                       x_4 & -x_2 \\
                       0 & x_1 \\
                     \end{array}\right]}\\
                            \;\;\;\rightarrow\;\;\;\\
                             \\
                          \end{array} 0,
\end{equation*}
and one sees that the map is linear.
By contrast, the  4-cycle is generated in degree 2, but is not
linear:
\begin{equation*}
[x_1x_3,x_2x_4] \begin{array}{c}
            \tiny{\left[\begin{array}{r}
              x_2x_4 \\
              -x_1 x_3 \\
            \end{array}\right]} \\
            \rightarrow \\
             \\
          \end{array} 0,
\end{equation*}
giving a non-linear map.

A special case of 2-linear resolutions are Ferrer ideals. 
A Ferrer ideal $I_{\mathcal S}$ is one in which
the degree-two linear generators can be placed in a table with an
inverse stair-case. Such staircases
arose historically in the study of integer partitions.
As an example take the Stanley-Reisner ideal
\begin{equation*}
	I_{\mathcal S} 
	= 
	<x_1x_6,x_1x_7,x_1x_8,x_2x_6,x_2x_7,x_3x_6,x_3x_7,x_4x_6,x_5x_6>\subseteq k[x_1,...,x_9].
\end{equation*}
The Ferrer table is:
\begin{equation*}
	\begin{array}{c|cccc}
		& 6 & 7 & 8 & 9 \\ \hline
	  1 & x_1x_6 & x_1x_7 & x_1x_8 &  \\
	  2 & x_2x_6 & x_2x_7&  &  \\
	  3 & x_3 x_6 & x_3 x_7 &  &  \\
	  4 & x_4 x_6 & &   &  \\
	  5 & x_5 x_6 &  &  &  \\
	\end{array}
\end{equation*}
Considering the non-empty cells as given by edges this corresponds to a
special type of bi-partite graph between nodes $\{1,2,3,4,5\}$ and
$\{6,7,8,9\}$. \cite{corso2009} show that, among the class of 
bi-partite graphs, Ferrer ideals are indeed
uniquely characterized as having a 2-linear minimal free resolution.

It is straightforward to show that 
the corresponding hierarchical model is decomposable by exhibiting the 
decomposition given by Lemma \ref{lemma: marginalisation}. 
First take two simplices based on the variables defining,
respectively, the rows and columns. In the example these are $J_1 =
12345, J_2=6789$. Then join all nodes corresponding to the
complement of the Ferrer diagram to give:
\begin{equation*}
\begin{array}{c|cccc}
   & 6 & 7 & 8 & 9 \\ \hline
  1 &  &  &  & x_1x_9 \\
  2 &  &  & x_2x_8 & x_2x_9  \\
  3 &   &  & x_3x_8& x_3x_9  \\
  4 &  & x_4 x_7   & x_4 x_8  & x_4x_9 \\
  5 &  & x_5x_7 & x_5x_8 & x_5x_9  \\
\end{array}
\end{equation*}
The maximal cliques are easily seen to be given by a simple rule on this complementary table.
For each non-empty row take the variable which defines that row together with
every other variables for nonempty columns in that row {\em and} all the variables for the rows
below that row. In this example we find, working down the rows, that the maximal cliques
are:
\begin{equation*}
	123459,234589,34589,45789,5789.
\end{equation*}
Note how to this example we can apply Lemma \ref{lemma: marginalisation}, 
by successively stripping off variables in the order: 
$x_1,x_2,x_3,x_4,x_5$. 
The separators are $23459, 34589, 4589, 5789, 789$. The rule provides
a proof of the following.
\begin{theorem} 
Hierarchical models generated by Ferrer ideals are decomposable.
\end{theorem}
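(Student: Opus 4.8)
The plan is to establish decomposability inductively, combining the marginalisation mechanism of Lemma~\ref{lemma: marginalisation} with the staircase (nesting) structure that defines a Ferrer ideal. Throughout I write the vertex set as a disjoint union $R \cup C$, where $R = \{1,\dots,m\}$ indexes the rows and $C$ the columns of the Ferrer table, and I record the defining shape as a weakly decreasing sequence of row-lengths. Since every generator of a Ferrer ideal is a degree-two monomial $x_ix_j$ with $i \in R$ and $j \in C$, no generator mixes two row-variables or two column-variables; because the generators of the Stanley--Reisner ideal are exactly the minimal non-faces of $\mathcal S$, this already shows that $R$ and $C$ are each complete in $\mathcal S$, while the cross-edges of $\mathcal S$ between $R$ and $C$ are precisely the cells of the \emph{complementary} table, which is again a staircase.

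The first and main combinatorial step is to verify the clique rule used in the example: for each non-empty row $i$ of the complementary table, the set consisting of $i$, the column-variables appearing in row $i$, and all the row-variables $i+1,\dots,m$ below it, is a maximal clique, and these exhaust the maximal cliques. Completeness of such a set follows since its row-part $\{i,\dots,m\}$ and its column-part lie in $R$ and $C$ respectively (both complete), while every required cross-edge is present because the column-supports grow weakly as one moves down the complementary staircase, so a column-variable attached to row $i$ is attached to every row below $i$ as well. Maximality, and the claim that nothing is omitted, follow from the same nesting: adjoining any further vertex forces a missing cross-edge, i.e. a cell of the Ferrer diagram, which by definition is a generator and hence a non-edge of $\mathcal S$.

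With the cliques identified, I would order them by their defining row $1,2,\dots$ and check the running-intersection property. The key observation is that the column-parts of consecutive cliques are nested and the row-parts are the nested tails $\{i,\dots,m\}$, so the intersection of the $i$-th clique with the union of the earlier ones is a rectangular block $\{i,\dots,m\}$ together with a set of columns, which is complete and is contained in the single preceding clique. Equivalently and more concretely, the variable $1$ lies in the top clique only; integrating it out, Lemma~\ref{lemma: marginalisation} yields a marginal hierarchical model whose Stanley--Reisner ideal is the Ferrer ideal obtained by deleting the first row, and whose separator with the removed clique is complete.

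Finally I would close the induction on $m$: the marginal model on $\{2,\dots,m\}\cup C$ is decomposable by the induction hypothesis, and since the exposed separator $I\cap J$ is complete, the recursive definition of decomposability is satisfied with $I$ the top clique and $J$ the remaining vertex set, proving the theorem. The hard part is the second paragraph --- establishing rigorously that the stated rule produces \emph{exactly} the maximal cliques, and in particular that each successively exposed row-variable belongs to a \emph{unique} maximal clique so that Lemma~\ref{lemma: marginalisation} applies at every stage. All of this rests on the monotone nesting of the column-supports down the staircase, which must be invoked carefully to rule out spurious cliques and to guarantee that the separators form a chain.
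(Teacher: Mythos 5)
Your proposal is correct and takes essentially the same approach as the paper: the paper's proof consists precisely of the complementary-table clique rule together with successively stripping off row-variables via Lemma~\ref{lemma: marginalisation}, which you recast as an explicit induction on the rows. The extra care you flag as the hard part (verifying the clique rule and the uniqueness needed for Lemma~\ref{lemma: marginalisation} from the staircase nesting) is detail the paper leaves implicit---its own example even lists redundant cliques such as $34589 \subset 234589$---so your write-up is, if anything, more rigorous than the original.
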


As another illustration of the duality between monomial ideals and 
conditional independence structures, we next consider two terminal networks. 
In \cite{saenz2010} the authors apply the theory and construction of minimal free resolutions
to the theory of reliability. One sub-class of these is to networks,
in the classical sense of network reliability. Consider a connected
graph $G = (E,V)$, with two identified nodes called {\em input} and
{\em output}, respectively.  A {\em cut} is a set of edges, which if
removed from the graph disconnects input and output. A {\em path} is
connected set of edges from input to output. A minimal cut is a cut
for which no proper subset is a cut and minimal path is
a path for which no proper subset is a path.

As a simple example consider the network depicted
in Figure \ref{fig: minimal cuts} with input $ = 1$ and
output $=4$ and edges:
$$e_1=1-2,\;e_2=2-4,\;e_3=2-3,\;e_4=1-3,\;e_5=3-4.$$
The minimal cuts are $\{e_1,e_4\},\;\{e_2,e_5\},\;\{e_1,e_3,e_5\},\;
\{e_2,e_3,e_4\}$. If we associate a variable $x_i$ with each edge
$e_i$ then the minimal cuts generate an ideal. In this example we
write
\begin{equation*}
	I_{\mathcal S} = <x_1x_4,x_2x_5,x_1x_3x_5,x_2x_3x_4>.
\end{equation*}
The maximal cliques of $\mathcal S$ for the corresponding model
simplicial complex $\mathcal S$ are $$\{15,24,123,345\}$$
\begin{figure}
	\begin{center}
		\epsfig{file= 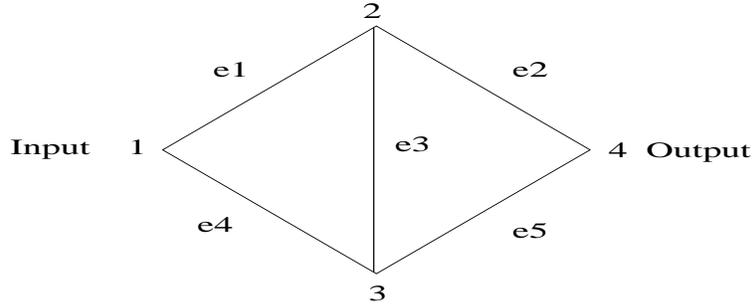,width=10cm,height=4cm}
	\end{center}
	\caption[Terminal network]
	{
      A two-terminal network.
	}
	\label{fig: minimal cuts}
\end{figure}
We could, on the other hand define $I_{\mathcal S^*}$ as being the
collection of all paths on the network. In this case the
$I_{\mathcal S^*}$ is generated by the minimal paths giving:
$$<x_1x_2,x_4x_5, x_1x_3x_5,x_2x_3x_4>,$$
and $S^*$ consists of the complements of the cuts and has maximal
cliques $\{15,24,134,235\}$.

There is a duality between cuts and path models for two-terminal networks:
\begin{lemma} The model simplicial complex $\mathcal S$ based on
the cut ideal $I_{\mathcal S}$ of a two terminal network is formed
from the complement of all paths on the network. Conversely, the
model simplicial complex $\mathcal S^*$ based on the path ideal
$I_{\mathcal S^*}$, is formed from the complement of all cuts.
Moreover: $(\mathcal S^*)^* = S$.
\end{lemma}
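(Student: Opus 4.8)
The plan is to reduce everything to a single blocking duality between paths and cuts, worked entirely at the level of edge sets. First I would record the two elementary observations that carry all the content. For $F \subseteq E$, removing $F$ leaves input and output connected precisely when some minimal path avoids $F$; hence $F$ is a cut if and only if $F$ meets every minimal path. Dually, the subgraph $(V,F)$ connects input to output precisely when $F$ contains a minimal path, so $(V,F)$ is disconnected precisely when $E \setminus F$ is a cut. Both facts are immediate from the definition of a path as a witness of connectivity.

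Next I would feed this into the Stanley--Reisner dictionary. The minimal generators of $I_{\mathcal S}$ are the minimal non-faces of $\mathcal S$, so taking $I_{\mathcal S}$ to be generated by the minimal cuts forces the non-faces of $\mathcal S$ to be exactly the cuts and its faces to be exactly the non-cuts. A set $F$ is a non-cut if and only if $E \setminus F$ contains a path, so $F$ is a facet (a maximal non-cut) if and only if $E \setminus F$ is a minimal path. Thus the maximal cliques of $\mathcal S$ are precisely the complements of the minimal paths, which is the first assertion; running the identical argument with the roles of cuts and paths interchanged gives the second assertion for $\mathcal S^*$. This matches the worked example, where $\{15,24,123,345\}$ are exactly the complements of the four minimal paths.

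For the identity $(\mathcal S^*)^* = \mathcal S$ I would exhibit $*$ as the Alexander-type dual $\mathcal S \mapsto \{F : E\setminus F \notin \mathcal S\}$ and observe that it is an involution. By the observations above, the faces of $\mathcal S^*$ are the sets containing no path, equivalently the $F$ with $E \setminus F$ a cut, equivalently the $F$ with $E \setminus F$ a non-face of $\mathcal S$; this is exactly the dual of $\mathcal S$. Since $E \setminus (E \setminus F) = F$, applying the operation twice returns $\mathcal S$. Equivalently, the clutters of minimal cuts and minimal paths are blockers of one another, and the blocker operation is involutive.

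The only step with genuine content --- everything else being definitional --- is the converse half of the blocking duality: that if $F$ meets every minimal cut then $F$ contains a path. I would argue by contradiction. If $F$ contained no path, then $(V,F)$ is disconnected, so $E \setminus F$ is a cut and therefore contains a minimal cut $C$; but $C \subseteq E \setminus F$ forces $F \cap C = \emptyset$, contradicting that $F$ meets every minimal cut. This hinge is what makes the path and cut clutters mutually blocking, and hence the whole correspondence an involution; I expect it to be the main obstacle, the remaining manipulations being routine translations through the Stanley--Reisner dictionary.
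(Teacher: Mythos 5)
Your proof is correct, but it is worth noting that the paper does not actually prove this lemma at all: it declares the statement ``a special example of Alexander duality'' and delegates everything to \cite{miller2005}, Proposition 1.37. What you have written is, in effect, the specialization of that cited result to the network setting, carried out from first principles. The comparison is instructive. The paper's route buys generality: the involution $(\mathcal S^*)^* = \mathcal S$ holds for the Alexander dual $\mathcal S \mapsto \{F : E \setminus F \notin \mathcal S\}$ of \emph{any} square-free complex, purely formally, exactly as your third paragraph observes (the dual is a complex because non-faces are upward closed, and $E \setminus (E\setminus F) = F$ makes the operation involutive). Your route makes visible where the network-specific content sits, and you locate it correctly: the formal involution is free, but identifying the path complex as the Alexander dual of the cut complex requires the blocking duality between the clutters of minimal paths and minimal cuts, and the only non-definitional direction is your hinge lemma (``$F$ meets every minimal cut $\Rightarrow$ $F$ contains a path''), whose contradiction argument --- no path in $F$ forces $E\setminus F$ to be a cut, hence to contain a minimal cut disjoint from $F$ --- is sound. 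Two further merits of your version: your facet computation (faces of $\mathcal S$ are the non-cuts, so facets are exactly complements of minimal paths) verifies against the paper's worked example $\{15,24,123,345\}$, which the paper asserts without derivation; and your systematic phrasing in terms of ``contains a minimal path'' quietly repairs the paper's loose definition of a (possibly non-minimal) path, under which downward closure of the complex of ``complements of paths'' is otherwise awkward to state. In short: same underlying mechanism as the citation, but self-contained, elementary, and with the single graph-theoretic input isolated rather than absorbed into a black box.
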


For example, the term $15$ of $\mathcal S$ is the complement of the (non-minimal) path $234$ 
and the term $14$ in $\mathcal S^*$ is the complement the (non-minimal) cut $235$.

This duality is a special example of Alexander duality and we omit the proof, 
see \cite{miller2005}, Proposition 1.37.  
The general result says that for a
square-free $\mathcal S$, if we define $\mathcal S^*$ as the
complement of all non-faces of $\mathcal S$, then $(\mathcal S^*)^*
= S$.

It will have been noticed that for this network $S$ and $S^*$ 
are self-dual in the sense that 
the two simplicial complexes have the same structure 
and only differ in the labelling of the vertexes. 
Both models have two separate conditional independence properties. 
Thus for $\mathcal S$ we have
$ X_1 \independent X_4 | (X_2, X_3, X_5)$ and $X_2 \independent X_5 | (X_1, X_3, X_4)$.

\subsection{Geometric constructions} Simplicial complexes are at the
heart of algebraic topology and it is natural to look in that field
for classes of simplicial complexes whose abstract version may be
used to support hierarchical models. We mention briefly one class
here arising from the fast-growing area of persistent homology, see
\cite{edelsbrunner2010}. This class has already been used by
\cite{lunagomez2009} to construct graphical models using so-called Alpha
complexes. We give the construction now. It is to based on the cover
provided by a union of balls in $R^d$,  a construction used by
\cite{edelsbrunner1995} in the context of computational geometry and in
\cite{naiman1992} and \cite{naiman1997} to study Bonferroni bounds in
statistics.

Thus, let $z_1, \ldots, z_p$ be $p$ points in $R^d$ and define the
solid balls with radius $r$ centered at the points:
$$B_i(r) = \{z : ||x-z_i|| \leq r\},\; i = 1, \ldots, p$$
The {\em nerve} of the cover represented by the union of balls is
the simplicial complex $\mathcal S$ derived form the intersections
of the balls, and is called the Alpha complex. It consists of
exactly all index sets $J$ for which $\cap_{i \in J} B_i(r) \neq
\emptyset$.

When the radius, $r$, is small $\mathcal S$ consists of unconnected
vertexes and the hierarchical model gives independence of the $X_1,...,X_p$.
As $r \rightarrow \infty$ there is a value of $r$ at and
beyond which $\cap_{i=1}^p B_i(r) \neq \emptyset$ and $\mathcal S$
consists of a single complete clique. In that case we have a full
hierarchical model. Between these two cases, and depending on the
position of the $z_i$ and the value of $r$ we obtain a rich classes
of simplicial complexes and hence hierarchical models. Some of these
will be decomposable and we refer to the discussion in
\cite{lunagomez2009}.

It is the study of the topology of the nerve as $r$ changes, and in
particular the behavior of its Betti numbers, which drives the area
of persistent homology. A important theoretical and computational
result is that this topology is also that of the reduced simplicial
complex based on the Delauney  complex associated with their Voronoi
diagram. That is to say, for fixed $r$ it is enough, from a
topological (homotopy) viewpoint, to use the sub-complex of the Delauney
complex $\mathcal S^-$ contained in $\mathcal S$. The theory derives
from classical results of \cite{borsuk1948} and \cite{leray1945}. One beautiful fact is
that the Delauney dual complex based on the furthest point Voronoi
diagram \citep{okabe2000}, is obtained by the Alexander duality
mentioned in the last subsection.

In this paper we have concentrated on the correspondence  between
$\mathcal S$ and its Stanley-Reisner ideal $I_{\mathcal S}$. The use
of $I_{\mathcal S}$ is not always explicit in persistent homology
but is implicit in the underlying homology theory: see \cite{saenz2008}
for a thorough investigation, including algorithms. Also, although
the topology of $\mathcal S$ and its reduced Delaunay version
$\mathcal S^-$ is the same, if their actual structure is different
they lead to different hierarchical models. One can also use
non-Euclidean metrics to define the cover and, indeed, work in different
spaces and with other kinds of cover. Notwithstanding these many
interesting technical issues the use of geometric constructions to
define interesting classes of hierarchical model promises to be very
fruitful.

\section{Conclusion}
There are many features and properties of monomial ideals which remain
to be exploited in statistics via the isomorphism discussed in the last section. 
We should mention minimal free resolutions, 
the closely related Hilbert series, Betti numbers 
(including graded and multi-graded versions) 
and Alexander duality. It is pleasing that in the general case
the development of the last section only requires
consideration of square-free ideals, whose theory is
a little easier than the full polynomial case. 
Fast algorithms are available for 
symbolic operations covering all these areas so that as 
further links are made they can be implemented.
We have not covered statistical analysis in this paper. 
Further work is in progress to fit and test 
the zero-cumulant conditions 
$
   D^\alpha g = 0
$
using, for example, kernel methods. 

\bibliographystyle{biometrika}
\bibliography{daniel}

\end{document}